\documentclass[12pt]{amsart}
\usepackage{dsfont}
\usepackage{amssymb, amsmath, amsthm}
\pagestyle{plain}
\usepackage{mathrsfs}
\newcommand{\no}[1]{#1}
\renewcommand{\no}[1]{}
\no{\usepackage{times}\usepackage[subscriptcorrection, slantedGreek, nofontinfo]{mtpro}
\renewcommand{\Delta}{\upDelta}}
\usepackage{color}
\usepackage{graphicx}
\usepackage{enumerate,comment}


 \setlength{\marginparwidth}{0.6in}
\setlength{\oddsidemargin}{0.0in}
\setlength{\evensidemargin}{0.0in}
\setlength{\textwidth}{6.5in}
\setlength{\topmargin}{0.0in}
\setlength{\textheight}{8.5in}

\newtheorem{theorem}{Theorem}[section]
\newtheorem{prop}{Proposition}[section]
\newtheorem{lem}{Lemma}[section]

\newtheorem{cor}{Corollary}[section]

\theoremstyle{remark}

%
\newcommand{\bel}{\begin{equation} \label}
\newcommand{\ee}{\end{equation}}

\newcommand{\R}{{\mathbb R}}
\newcommand{\N}{{\mathbb N}}

\def\phi {\varphi}

\renewcommand{\leq}{\leqslant}
\renewcommand{\geq}{\geqslant}

\def\beq{\begin{equation}}
\def\eeq{\end{equation}}
\newcommand{\bea}{\begin{eqnarray}}
\newcommand{\eea}{\end{eqnarray}}
\newcommand{\beas}{\begin{eqnarray*}}
\newcommand{\eeas}{\end{eqnarray*}}

\providecommand{\abs}[1]{\left\lvert#1\right\rvert}
\providecommand{\norm}[1]{\left\lVert#1\right\rVert}

%

\numberwithin{equation}{section}


\title[Determination of quasilinear terms]{Determination of quasilinear terms  from restricted data and point measurements}
\begin{document}
\begin{abstract} We study the inverse problem of determining uniquely and stably quasilinear terms appearing in an elliptic equation from boundary excitations and measurements associated with the solutions of the corresponding equation. More precisely, we consider the determination of quasilinear terms depending simultaneously on the solution and the gradient of the solution of the elliptic equation from  measurements of the flux restricted to some fixed and finite number of points located at the boundary of the domain generated by Dirichlet data lying on a finite dimensional space. Our Dirichlet data will be explicitly given by affine functions taking values in $\R$. We prove our results by considering a new approach based on explicit asymptotic properties of solutions of these class of nonlinear elliptic equations with respect to a small parameter imposed at the boundary of the domain.

\medskip
\noindent
{\bf  Keywords:} Inverse problems, Nonlinear elliptic equations, Asymptotic properties, Uniqueness,
Stability estimate.\\

\medskip
\noindent
{\bf Mathematics subject classification 2020 :} 35R30, 35J61, 35J62.
\end{abstract}


\author[Yavar Kian]{Yavar Kian}
\address{Univ Rouen Normandie, CNRS, Normandie Univ, LMRS UMR 6085, F-76000 Rouen, France.}
\email{	yavar.kian@univ-rouen.fr}

\maketitle


\section{Introduction}
Let $\Omega$ be a bounded domain of $\mathbb{R}^n$, $n\geq 2$, with $ C^{2+\alpha}$, $\alpha\in(0,1)$, boundary.  Let   $a:=(a_{i,j})_{1 \leq i,j \leq n} \in C^3(\R\times\R^n;\R^{n\times n})$
be symmetric, that is 
\bel{ell1}
a_{i,j}(\mu,\eta)=a_{j,i}(\mu,\eta),\quad  (\mu,\eta)\in\R\times\R^n,\ i,j = 1,\ldots,n, 
\ee
and assume that there exists $\kappa\in C(\R\times\R^n;(0,+\infty))$ such that $a$ fulfill the following ellipticity condition
\bel{ell2}
\sum_{i,j=1}^n a_{i,j}(\mu,\eta) \xi_i \xi_j \geq \kappa(\mu,\eta) |\xi|^2, \quad 
\mbox{for each $\mu\in\R,\ \eta\in\R^n,\  \xi=(\xi_1,\ldots,\xi_n) \in \R^n$}.
\ee
We set also  the constant parameters $\lambda,\tau\in\R$ and $\omega\in\mathbb S^{n-1}:=\{x\in\R^n:\ |x|=1\}$. Then we consider  the following boundary value problem
\bel{eq1}
\left\{
\begin{array}{ll}
-\sum_{i,j=1}^n \partial_{x_i} 
\left( a_{i,j}(u(x),\nabla u(x)) \partial_{x_j} u(x) \right)=0  & \mbox{in}\ \Omega ,
\\
u(x)=\lambda+\tau x\cdot\omega &x\in\partial\Omega.
\end{array}
\right.
\ee

Under the above assumptions, we prove in Proposition \ref{p1} that, for every $\lambda\in\R$, there exists $\epsilon_\lambda>0$ depending only on  $a$, $\lambda$ and $\Omega$  such that for all $\tau\in(-\epsilon_\lambda,\epsilon_\lambda)$,  the problem \eqref{eq1}  admits a unique  solution $u_{\lambda,\omega,\tau}\in  C^{2+\alpha}(\overline{\Omega})$. Then, we associate with problem \eqref{eq1}  the  measurement of the flux at the boundary, generated by the Dirichlet excitation $\lambda+\tau x\cdot\omega$, given by
$$\partial_{\nu_a} u_{\lambda,\omega,\tau}(x)=\sum_{i,j=1}^na_{i,j}(u_{\lambda,\omega,\tau}(x),\nabla u_{\lambda,\omega,\tau}(x))\partial_{x_j}u_{\lambda,\omega,\tau}(x)\nu_i(x),\quad \lambda\in\R,\ \tau\in(-\epsilon_{\lambda},\epsilon_{\lambda}),\ x\in\partial\Omega,$$
with  $\nu(x)=(\nu_1(x),\ldots,\nu_n(x))$ the outward unit normal vector to $\partial\Omega$ computed at $x \in \partial\Omega$.
Fixing $x_1,\ldots,x_m\in\partial\Omega$, with $m\leq n$, we study the  determination of the quasilinear term $a$   from some knowledge of 
\bel{data}\partial_{\nu_a} u_{\lambda,\omega,\tau}(x_j),\quad \lambda\in\R,\ \tau\in(-\epsilon_\lambda,\epsilon_\lambda),\  j=1,\ldots,m.\ee
 More precisely, we are looking for both uniqueness and stability result for this inverse problem.

The inverse problem under investigation in this article can be associated with different physical problems described by quasilinear equations of the form \eqref{eq1}. This includes the determination of the conductivity of a medium, that depend on the voltage and the current, from measurements of the current flux generated by  voltage associated with our class of affine Dirichlet data. Such problem can be considered for  physical models where the transfer from voltage to current density can not be described by the classical Ohm's law but some more general nonlinear expression and it includes problems of viscous flows \cite{GR} or plasticity phenomena \cite{CS}.
We can also mention problems of heat conduction, associated with \eqref{eq1},  where the goal is to determine the thermal conductivity of the medium, associated with the quasilinear term $a$, that depends simultaneously on the temperature and the thermal gradient (see e.g. \cite{Al,BBC,Ca2,ZF}).

The determination of a quasilinear term appearing in a non-linear elliptic equation has received an increasing interest among the mathematical community and several works have been devoted to the study of this class of inverse problems. The first works  in that direction have been devoted to numerical and theoretical study of the determination of conductivities depending only on the solutions from overspecified data in  \cite{Ca1,EPS1,Ku,PR}. For problems more closely related to the one under consideration in this article, one can refer to the works \cite{EPS2,MU,Sh} where the unique determination of  quasilinear
terms has been addressed from data given by the full knowledge of the Dirichlet-to-Neumann map associated with \eqref{eq1} (i.e. Neumann boundary measurements on the whole boundary $\partial\Omega$ of the solutions of the equation \eqref{eq1} for all possible Dirichlet excitations). In the more recent works \cite{C,K}, the authors addressed the stability issue for this last problem for quasilinear terms depending only on the solution from some restriction of the Dirichlet-to-Neumann map associated with \eqref{eq1}. Similar problems have been investigated for more general class of quasilinear terms depending also on the space variable in the works \cite{CF,CFKKU,KKU,Sun1,SuUh} among which the most general one can be found in \cite{CFKKU} where the authors addressed the open problem of determining quasilinear terms depending simultaneously on the solutions, the gradient of the solutions and the space variable. Finally, we mention the works \cite{FO20,ImYa,IN,IS,KrU1,KrU2,LLLS} devoted to the determination of semilinear terms by using the first order linearization technique of \cite{Is1} as well as the higher order linearization initiated by \cite{KLU}.

All the above mentioned results, that are directly connected to our inverse problem, have been stated with measurements of the flux at the boundary associated with Dirichlet excitations lying on an open subset of an infinite dimension space. In addition, the measurements under consideration for the determination of  quasilinear terms have always been restricted to at least an open set of the boundary $\partial\Omega$ of the domain and none of these works addressed the determination of matrix valued quasilinear terms depending simultaneously on the solutions and the gradient of the solutions. The main goal of the present article is to study the determination of general class of matrix valued quasilinear terms depending simultaneously on the solutions and the gradient of the solutions and to make two important restrictions on the data under consideration for this class of inverse problems: 1) Restrict the Dirichlet data to an open set of  affine functions of $\R^n$ taking values in $\R$  which is a space of dimension $n+1$; 2) Localize the measurements to at most $n$ points  of the boundary $\partial\Omega$.

\section{Main results}
In Proposition \ref{p1} we  prove that, for every $\lambda\in\R$ and every $\omega\in\mathbb{S}^{n-1}$, there exists $\epsilon_\lambda>0$ depending only on $a$, $\Omega$, $\lambda$, such that for $|\tau|<\epsilon_\lambda$, the problem \eqref{eq1} admits a unique solution $u_{\lambda,\omega,\tau}\in  C^{2+\alpha}(\overline{\Omega})$. Using this property we can state our first result, which will be a uniqueness result. For this purpose, we will consider quasilinear terms $a\in C^{2+\ell}(\R\times\R^n;\R^{n\times n})$, $\ell\in\mathbb N$, having an orthonormal basis of eigenfunctions depending only on the solution $u$ of the equation \eqref{eq1}. More precisely, we assume that there exist $p\in\mathbb N$, $m_1,\ldots,m_p\in\mathbb N$, with $m_1+\ldots+m_p=n$, 
$\gamma_k\in C^{2+\ell}(\R\times\R^n;(0,+\infty))$, $k=1,\ldots,p$, $U\in  C^{2+\ell}(\R;\R^{n\times n})$ such that
\bel{a1}a(\mu,\eta)=U(\mu)\textrm{Diag}(\underbrace{\gamma_1(\mu,\eta),\ldots,\gamma_1(\mu,\eta)}_{m_1\ \textrm{times}},\gamma_2(\mu,\eta),\ldots,\gamma_p(\mu,\eta))U(\mu)^T,\quad (\mu,\eta)\in\R\times\R^n,\ee
 with
$$0<\gamma_1(\mu,\eta)<\ldots<\gamma_{p}(\mu,\eta),\quad U(\mu)^TU(\mu)=U(\mu)U(\mu)^T=\textrm{Id}_{\R^{n\times n}},\quad (\mu,\eta)\in\R\times\R^n.$$
Here $\textrm{Id}_{\R^{n\times n}}$ stands for the identity matrix of size $n$.\\

In addition to this condition, we make a geometrical assumption that can be stated as follows:\\
(H) There exists an orthonormal basis $\{e_1',\ldots,e_n'\}$ of $\R^n$ and $n$ points $x_1,\ldots,x_n\in\partial\Omega$ such that
\bel{as}|\nu(x_j)-e_j'|<\frac{1}{\sqrt{n}},\quad j=1,\ldots,n.\ee
Our first main result is a uniqueness result that can be stated as follows.
\begin{theorem}\label{t1}  Fix $\ell\in\mathbb N$ and, for $j=1,2$, let $a^j\in C^{2+\ell}(\R\times\R^n;\R^{n\times n})$ satisfy \eqref{ell1}-\eqref{ell2} as well as condition \eqref{a1}. Assume also that condition \emph{(H)} is fulfilled and fix $\{e_1',\ldots,e_n'\}$ an orthonormal basis of $\R^n$ and $n$ points $x_1,\ldots,x_n\in\partial\Omega$ such that \eqref{as} holds true.
Fix $R>0$ and consider $u^j_{\lambda,\omega,\tau}\in C^{2+\alpha}(\overline{\Omega})$, $\lambda\in(-R,R)$, $\omega\in\mathbb S^{n-1}$ and $\tau\in(-\epsilon_\lambda,\epsilon_\lambda)$, the unique solution of \eqref{eq1} with $a=a^j$. Then, for every $\epsilon\in(0,\epsilon_\lambda)$, the condition
\bel{t1b}\partial_{\nu_{a^1}} u_{\lambda,\omega,\tau}^1(x_k)=\partial_{\nu_{a^2}} u_{\lambda,\omega,\tau}^2(x_k),\quad \lambda\in[-R,R],\ \omega\in\mathbb S^{n-1},\ \tau\in(-\epsilon,\epsilon),\ k=1,\ldots,n\ee
implies that 
\bel{t1c}D_\eta^{k} a^1(\lambda,0)=D_\eta^{k} a^2(\lambda,0),\quad \lambda\in[-R,R],\ k=0,\ldots,\ell-1.\ee
\end{theorem}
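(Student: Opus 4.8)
The plan is to exploit the smallness of $\tau$. When $\tau=0$ the Dirichlet datum is the constant $\lambda$, so $u_{\lambda,\omega,0}\equiv\lambda$ and $\nabla u_{\lambda,\omega,0}\equiv0$. By the implicit function theorem underlying Proposition \ref{p1}, $\tau\mapsto u_{\lambda,\omega,\tau}=:u_\tau$ is sufficiently smooth into $C^{2+\alpha}(\overline\Omega)$ (using $a\in C^{2+\ell}$), so it has a Taylor expansion $u_\tau=\lambda+\sum_{j\geq1}\tau^j u^{(j)}$ with $u^{(j)}\in C^{2+\alpha}(\overline\Omega)$. Matching powers of $\tau$ in \eqref{eq1}, $u^{(1)}$ solves $-\mathrm{div}(a(\lambda,0)\nabla u^{(1)})=0$ with affine datum $x\cdot\omega$; since $a(\lambda,0)$ is constant, uniqueness forces $u^{(1)}(x)=x\cdot\omega$ and $\nabla u^{(1)}\equiv\omega$. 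Each $u^{(j)}$, $j\geq2$, then solves $-\mathrm{div}(a(\lambda,0)\nabla u^{(j)})=\mathrm{Source}^{(j)}$ with homogeneous datum, where $\mathrm{Source}^{(j)}$ is built from the $\eta$- and $\mu$-derivatives of $a$ at $(\lambda,0)$ and from $u^{(2)},\dots,u^{(j-1)}$. A bookkeeping of powers of $\tau$ (using $\nabla u_\tau=O(\tau)$) shows that $D_\eta^k a(\lambda,0)$ first enters \eqref{eq1} at order $\tau^{k+1}$, so $u^{(j)}$ depends only on $D_\eta^r a(\lambda,0)$ with $r\le j-1$ together with their $\mu$-derivatives.

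Next I would expand the flux. On $\partial\Omega$ one has $u_\tau(x_{k'})=\lambda+\tau\,x_{k'}\cdot\omega$ exactly, so the only unknown is $g_\tau:=\nabla u_\tau(x_{k'})=\tau\omega+\sum_{j\geq2}\tau^j g_j$ with $g_j=\nabla u^{(j)}(x_{k'})$. Expanding $\partial_{\nu_a}u_\tau(x_{k'})=\nu(x_{k'})^T a(u_\tau,g_\tau)\,g_\tau$ in $\tau$ and using \eqref{t1b} together with smoothness in $\tau$, I equate the coefficient of each $\tau^l$, $l\le\ell$, for $a^1$ and $a^2$. At order $\tau^1$ this reads $\nu(x_{k'})^T a^1(\lambda,0)\omega=\nu(x_{k'})^T a^2(\lambda,0)\omega$; letting $\omega$ vary and noting that \eqref{as} makes $\{\nu(x_1),\dots,\nu(x_n)\}$ a basis of $\R^n$ (a short Cauchy--Schwarz argument), I recover $a^1(\lambda,0)=a^2(\lambda,0)$ for all $\lambda\in[-R,R]$. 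This is the case $k=0$; differentiating in $\lambda$ it gives $\partial_\mu^s a^1(\lambda,0)=\partial_\mu^s a^2(\lambda,0)$ for every $s$ for free.

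I would then induct on $k$, from $1$ to $\ell-1$, the hypothesis being $D_\eta^r a^1(\cdot,0)=D_\eta^r a^2(\cdot,0)$ on $[-R,R]$ for all $r\le k-1$ (so all mixed derivatives $\partial_\mu^s D_\eta^r a$, $r\le k-1$, match). By the power-counting remark, $u^{(j),1}=u^{(j),2}$ for $j\le k$. The crucial point is a cancellation at the top order: the $D_\eta^k a$-contribution to $\mathrm{Source}^{(k+1)}$ equals $\mathrm{div}\!\big(W_a(\omega)\,\omega\big)$ with $W_a(\omega):=\sum_{|\beta|=k}\tfrac1{\beta!}D_\eta^\beta a(\lambda,0)\,\omega^\beta$ \emph{constant} in $x$, hence it vanishes; thus $u^{(k+1)}$ does not feel $D_\eta^k a$ either, giving $u^{(k+1),1}=u^{(k+1),2}$ and $g_{k+1}^1=g_{k+1}^2$. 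Consequently, after subtracting all matched terms, the coefficient of $\tau^{k+1}$ in the flux reduces to $\nu(x_{k'})^T W(\omega)\,\omega=0$, where $W(\omega):=W_{a^1}(\omega)-W_{a^2}(\omega)=\sum_{|\beta|=k}\tfrac1{\beta!}\Delta_\beta\,\omega^\beta$ and $\Delta_\beta:=D_\eta^\beta(a^1-a^2)(\lambda,0)$. Using \eqref{as} to let $\nu(x_{k'})$ range over a basis upgrades this to $W(\omega)\,\omega=0$ for every $\omega\in\R^n$.

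The main obstacle is the final passage from $W(\omega)\,\omega\equiv0$ to $W\equiv0$, i.e.\ to $\Delta_\beta=0$ for all $|\beta|=k$: this is \emph{false} for a general symmetric-matrix-valued form (already in dimension two there are nonzero symmetric quadratic forms $W$ with $W(\omega)\,\omega\equiv0$), and it is precisely here that \eqref{a1} is indispensable. Writing $P_1(\lambda),\dots,P_p(\lambda)$ for the spectral projections of the common matrix $a^1(\lambda,0)=a^2(\lambda,0)$, condition \eqref{a1} and the strict ordering $\gamma_1<\dots<\gamma_p$ yield $D_\eta^\beta a^r(\lambda,0)=\sum_i D_\eta^\beta\gamma^r_i(\lambda,0)\,P_i(\lambda)$ for $r=1,2$ with the \emph{same} projections, whence $W(\omega)=\sum_i c_i(\omega)\,P_i(\lambda)$ for scalar homogeneous polynomials $c_i$. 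Since the vectors $P_i(\lambda)\omega$ are mutually orthogonal, $W(\omega)\,\omega=\sum_i c_i(\omega)\,P_i(\lambda)\omega=0$ forces $c_i(\omega)\,P_i(\lambda)\omega=0$ for each $i$; as $P_i(\lambda)\omega\neq0$ off a proper subspace this gives $c_i\equiv0$, hence every $\Delta_\beta=0$. This proves $D_\eta^k a^1(\lambda,0)=D_\eta^k a^2(\lambda,0)$ on $[-R,R]$ and closes the induction up to $k=\ell-1$, which is \eqref{t1c}.
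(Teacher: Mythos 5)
Your proposal is correct and follows essentially the same route as the paper's proof: first-order linearization in $\tau$ recovers $a(\lambda,0)$ via the geometric argument based on \eqref{as}, the key cancellation $\nabla\cdot\bigl(D_\eta^{k}a(\lambda,0)(\omega,\ldots,\omega)\omega\bigr)=0$ (the paper's Lemma \ref{l2}) makes $D_\eta^{k}a(\lambda,0)$ appear only in the explicit top-order term of the $\tau^{k+1}$ flux coefficient, and the spectral structure \eqref{a1} reduces $[D_\eta^{k}a^1-D_\eta^{k}a^2](\omega,\ldots,\omega)\omega=0$ to scalar identities for the eigenvalue derivatives. Your only departures are cosmetic: you dispose of the directions $\omega$ with $P_i(\lambda)\omega=0$ by polynomial density rather than the paper's explicit limiting sequence, and you conclude via linear independence of monomials where the paper cites a polarization identity.
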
 

As a direct consequence of this result we can prove the following.

\begin{cor}\label{c1} Let the condition of Theorem \ref{t1} be fulfilled for $a^j\in C^{\infty}(\R\times\R^n;\R^{n\times n})$, $j=1,2$, and assume that, for all $\lambda\in[-R,R]$ the map $\R^n\ni\eta\mapsto a^j(\lambda,\eta)\in \R^{n\times n}$ is  analytic. Then,  for every $\epsilon\in(0,\epsilon_\lambda)$ and every $x_1,\ldots,x_n\in\partial\Omega$ such that \eqref{as} holds true, the condition \eqref{t1b} implies that $a^1=a^2$ on $[-R,R]\times\R^n$.\end{cor}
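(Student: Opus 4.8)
The plan is to bootstrap Theorem \ref{t1} over all orders of differentiation and then invoke real-analyticity. First I would observe that since $a^j\in C^\infty(\R\times\R^n;\R^{n\times n})$ we have $a^j\in C^{2+\ell}(\R\times\R^n;\R^{n\times n})$ for \emph{every} $\ell\in\N$, and that the hypotheses of Theorem \ref{t1} — the ellipticity \eqref{ell1}--\eqref{ell2}, the decomposition \eqref{a1}, and the geometric assumption (H) with the given points $x_1,\ldots,x_n$ — are assumed to hold and are independent of the choice of $\ell$. Crucially, the measurement hypothesis \eqref{t1b} is likewise $\ell$-independent. Hence, for each fixed $\ell\in\N$, Theorem \ref{t1} applies verbatim and yields
\beq
D_\eta^{k} a^1(\lambda,0)=D_\eta^{k} a^2(\lambda,0),\quad \lambda\in[-R,R],\ k=0,\ldots,\ell-1.
\eeq
Letting $\ell$ range over all of $\N$, this relation holds for every order $k\in\N$, so that all partial derivatives in $\eta$ of $a^1$ and $a^2$ at $\eta=0$ coincide, for each $\lambda\in[-R,R]$.

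Next I would exploit the analyticity assumption. For each fixed $\lambda\in[-R,R]$, the maps $\eta\mapsto a^1(\lambda,\eta)$ and $\eta\mapsto a^2(\lambda,\eta)$ are real-analytic on $\R^n$, and by the previous step their full Taylor expansions at $\eta=0$ agree entrywise. A real-analytic function on $\R^n$ coincides with its Taylor series in a neighborhood of each point, so $a^1(\lambda,\cdot)$ and $a^2(\lambda,\cdot)$ agree on a neighborhood of $\eta=0$; since $\R^n$ is connected, the identity principle for real-analytic functions then forces them to agree on all of $\R^n$. Applying this entrywise to the matrix-valued maps gives $a^1(\lambda,\cdot)=a^2(\lambda,\cdot)$ on $\R^n$ for every $\lambda\in[-R,R]$, which is exactly $a^1=a^2$ on $[-R,R]\times\R^n$.

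There is essentially no analytic obstacle here beyond Theorem \ref{t1} itself: the corollary is a soft consequence obtained by combining the theorem across all smoothness orders with the rigidity of real-analytic functions. The only points requiring a little care are (i) checking that the structural conditions underlying Theorem \ref{t1} are genuinely available for every $\ell$, which is immediate since they are part of the corollary's standing hypotheses and do not reference $\ell$; and (ii) justifying the passage from equality of \emph{all} $\eta$-derivatives at the single point $\eta=0$ to global equality on $\R^n$, where the connectedness of $\R^n$ together with the real-analytic identity theorem is precisely what legitimizes the local-to-global step. Accordingly, I expect the write-up to be short, the substantive content being already carried by Theorem \ref{t1}.
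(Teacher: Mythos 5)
Your proposal is correct and coincides with the paper's own (implicit) argument: the paper states Corollary \ref{c1} as a direct consequence of Theorem \ref{t1}, precisely by applying the theorem for every $\ell\in\N$ (legitimate since $a^j\in C^{\infty}$ and none of the hypotheses, including \eqref{t1b} and (H), depend on $\ell$) to equate all $\eta$-derivatives $D_\eta^{k}a^1(\lambda,0)=D_\eta^{k}a^2(\lambda,0)$, $k\in\N$, and then invoking the identity theorem for the real-analytic maps $\eta\mapsto a^j(\lambda,\eta)$ on the connected set $\R^n$ for each fixed $\lambda\in[-R,R]$. Your two points of care --- the $\ell$-independence of the hypotheses and the local-to-global step via connectedness --- are exactly the right ones, and nothing is missing.
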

We can also improve the result of Theorem \ref{t1} with measurements restricted at one arbitrary chosen point of $\partial\Omega$  by considering more specific class of quasilinear terms of the form
\bel{cond}a(\mu,\eta)=\gamma(\mu,\eta)\textrm{Id}_{\R^{n\times n}},\quad (\mu,\eta)\in\R\times\R^n,\ee
where $\gamma\in  C^{3}(\R\times\R^n;(0,+\infty))$.

\begin{cor}\label{c2} Let the condition of Theorem \ref{t1} be fulfilled  and assume that, for $j=1,2$, $a^j\in C^{2+\ell}(\R\times\R^n;\R^{n\times n})$ takes the form \eqref{cond} with $\gamma=\gamma^j\in  C^{2+\ell}(\R\times\R^n;\R_+)$. Then, for any arbitrary chosen $x_0\in\partial\Omega$ and  $\epsilon\in(0,\epsilon_\lambda)$, the condition 
\bel{c2a}\partial_{\nu_{a^1}} u_{\lambda,\omega,\tau}^1(x_0)=\partial_{\nu_{a^2}} u_{\lambda,\omega,\tau}^2(x_0),\quad \lambda\in[-R,R],\ \omega\in\mathbb S^{n-1},\ \tau\in(-\epsilon,\epsilon)\ee
implies \eqref{t1c}.\end{cor}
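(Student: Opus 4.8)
The plan is to rerun the $\tau$-asymptotic analysis underlying Theorem \ref{t1} in the scalar case \eqref{cond}, where the structure $a^j=\gamma^j\,\textrm{Id}_{\R^{n\times n}}$ collapses the measured flux to the single scalar quantity $\partial_{\nu_{a^j}}u^j_{\lambda,\omega,\tau}(x_0)=\gamma^j(u^j_{\lambda,\omega,\tau}(x_0),\nabla u^j_{\lambda,\omega,\tau}(x_0))\,\nabla u^j_{\lambda,\omega,\tau}(x_0)\cdot\nu(x_0)$. From Proposition \ref{p1} and the expansion developed for Theorem \ref{t1}, I would use that, for $|\tau|<\epsilon_\lambda$, the solution admits in $C^{2+\alpha}(\overline\Omega)$ an asymptotic expansion $u^j_{\lambda,\omega,\tau}=\lambda+\sum_{k=1}^{\ell}\tau^k w^{(k),j}_{\lambda,\omega}+o(\tau^{\ell})$, where $w^{(1),j}_{\lambda,\omega}(x)=x\cdot\omega$ (the affine function being harmonic for the constant-coefficient operator obtained by linearizing \eqref{eq1} at the state $u\equiv\lambda$), while for $k\geq2$ the field $w^{(k),j}_{\lambda,\omega}$ solves that same constant-coefficient elliptic equation with homogeneous Dirichlet data and a source $G^{(k),j}$.

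Inserting the expansion into the flux and collecting powers of $\tau$, I would write $\partial_{\nu_{a^j}}u^j_{\lambda,\omega,\tau}(x_0)=\sum_{k\geq1}\tau^k F^j_k(\lambda,\omega)+o(\tau^{\ell})$ and isolate, at each order, the contribution carrying the top $\eta$-derivative. A Taylor expansion of $\gamma^j$ at $(\lambda,0)$ shows that the only way $D_\eta^{k}\gamma^j(\lambda,0)$ can enter the coefficient of $\tau^{k+1}$ is through the product of its extreme term $\tfrac1{k!}[D_\eta^{k}\gamma^j(\lambda,0)](\nabla u,\ldots,\nabla u)$, evaluated on the leading gradient $\tau\omega$, with the leading factor $\tau\,\omega\cdot\nu(x_0)$ of $\nabla u\cdot\nu(x_0)$, giving $F^j_{k+1}(\lambda,\omega)=\tfrac1{k!}[D_\eta^{k}\gamma^j(\lambda,0)](\omega,\ldots,\omega)\,(\omega\cdot\nu(x_0))+r^j_k(\lambda,\omega)$. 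The crucial point, which I would verify explicitly, is that the remainder $r^j_k$ depends only on the derivatives $\partial_\mu^{a}D_\eta^{b}\gamma^j(\lambda,0)$ with $b<k$: indeed the same extreme term, evaluated on the leading gradient, produces a flux that is constant in $x$ and hence has vanishing divergence, so it never enters any source $G^{(k'),j}$, and one checks that $G^{(k'),j}$ for $k'\leq k+1$ involves only $\eta$-derivatives of order at most $k-1$; consequently the fields $w^{(k'),j}$ appearing at order $\tau^{k+1}$ carry no information about $D_\eta^{k}\gamma^j$.

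I would then argue by induction on $k$ that \eqref{c2a} implies $D_\eta^{k}\gamma^1(\lambda,0)=D_\eta^{k}\gamma^2(\lambda,0)$ for all $\lambda\in[-R,R]$, $k=0,\ldots,\ell-1$, which is exactly \eqref{t1c} since $D_\eta^k a^j=(D_\eta^k\gamma^j)\,\textrm{Id}_{\R^{n\times n}}$. For $k=0$ the coefficient of $\tau$ gives $\gamma^1(\lambda,0)(\omega\cdot\nu(x_0))=\gamma^2(\lambda,0)(\omega\cdot\nu(x_0))$ for all $\omega\in\mathbb S^{n-1}$, and choosing $\omega$ with $\omega\cdot\nu(x_0)\neq0$ yields $\gamma^1(\lambda,0)=\gamma^2(\lambda,0)$. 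Assuming equality of all $\eta$-derivatives of order $<k$ at $(\lambda,0)$ for every $\lambda\in[-R,R]$, differentiation in $\lambda$ gives equality of every mixed derivative $\partial_\mu^{a}D_\eta^{b}\gamma^j(\lambda,0)$ with $b<k$; by the previous paragraph this forces $r^1_k=r^2_k$, so matching the coefficient of $\tau^{k+1}$ in \eqref{c2a} gives $[D_\eta^{k}\gamma^1(\lambda,0)-D_\eta^{k}\gamma^2(\lambda,0)](\omega,\ldots,\omega)\,(\omega\cdot\nu(x_0))=0$ for all $\omega\in\mathbb S^{n-1}$. For $\omega$ with $\omega\cdot\nu(x_0)\neq0$ the homogeneous degree-$k$ polynomial $P(\omega):=[D_\eta^{k}\gamma^1(\lambda,0)-D_\eta^{k}\gamma^2(\lambda,0)](\omega,\ldots,\omega)$ vanishes, hence by continuity and homogeneity $P\equiv0$ on $\R^n$, which forces the symmetric tensor $D_\eta^{k}\gamma^1(\lambda,0)-D_\eta^{k}\gamma^2(\lambda,0)$ to vanish and closes the induction.

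The main obstacle is the bookkeeping in the second step: one must confirm that at order $\tau^{k+1}$ every term besides the displayed one depends only on $\eta$-derivatives of $\gamma^j$ of order strictly below $k$, the essential mechanism being that the top-order term evaluated on the constant leading gradient is divergence-free and therefore absent from the correction equations. The conceptual reason a single arbitrary point $x_0$ now suffices—whereas condition \emph{(H)} forces $n$ points in Theorem \ref{t1}—is the scalar form \eqref{cond}: the directional data needed to reconstruct the symmetric tensor $D_\eta^{k}\gamma^j(\lambda,0)$ are supplied entirely by letting $\omega$ range over $\mathbb S^{n-1}$, the single scalar weight $\omega\cdot\nu(x_0)$ being harmless, while in the genuinely matrix-valued setting the flux at $x_0$ only probes the one output direction $\nu(x_0)$ and several independent normals are indispensable.
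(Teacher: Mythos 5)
Your proposal is correct and follows essentially the same route as the paper: your key ``divergence-free'' observation (that the top-order term $\tfrac1{k!}D_\eta^{k}\gamma(\lambda,0)(\omega,\ldots,\omega)\omega$, being constant in $x$, never enters the correction equations) is exactly the content of Lemma \ref{l2}, and your induction with the density-plus-continuity treatment of directions $\omega'\perp\nu(x_0)$ followed by polarization reproduces the paper's proof of Corollary \ref{c2}, which specializes the Theorem \ref{t1} argument to the scalar form \eqref{cond} at a single point.
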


For our stability results, let us first mention that in Proposition \ref{p1} we will prove that for $a\in C^{2+\ell}(\R\times\R^n;\R^{n\times n})$, $\ell\in\mathbb N$, the map $(-\epsilon,\epsilon)\ni \tau\mapsto u_{\lambda,\omega,\tau}\in  C^{2+\alpha}(\overline{\Omega})$ is lying in $C^\ell$. Then, we will consider the stable determination of the quasilinear term $a$ from the knowledge of the data
$$\partial_\tau^k\partial_{\nu_{a^1}} u_{\lambda,\omega,\tau}^1(x_j)|_{\tau=0},\quad \lambda\in\R,\ \omega\in\mathbb S^{n-1},\ k=1,\ldots,\ell,\ j=1,\ldots,n.$$
Note that such data are included in the data under consideration in Theorem \ref{t1}. Our stability result can be stated as follows.

\begin{theorem}\label{t2}  Let the condition of Theorem \ref{t1} be fulfilled with $a^j\in C^{3+\ell}(\R\times\R^n;\R^{n\times n})$, $j=1,2$, and assume that there exist $\gamma^j\in C^{3+\ell}(\R\times\R^n;(0,+\infty))$ and $b^j\in C^{3+\ell}(\R;\R^{n\times n})$ such that
\bel{t2aa} a^{j}(\mu,\eta)=\gamma^j(\mu,\eta)b^j(\mu),\quad \gamma^j(\mu,0)=1,\quad (\mu,\eta)\in\R\times\R^n.\ee
Assume also that there exists $\psi\in C(\R_+;\R_+)$ an increasing function such that
\bel{t2a} \abs{\partial_\mu^k\partial_\eta^\alpha a^j(\mu,0)}_{\R^{n\times n}}\leq \psi(|\mu|),\quad \mu\in\R,\ k\in\mathbb N,\ \alpha\in\mathbb N^n,\ k+|\alpha|\leq 3+\ell.\ee
Consider $u_{\lambda,\omega,\tau}^j$, $j=1,2$, the solution of \eqref{eq1} with $a=a^j$ and $x_1,\ldots,x_n\in\partial\Omega$ satisfying condition \eqref{t1b}.
Then the map $\tau\mapsto \partial_{\nu_{a^j}}u_{\lambda,\omega,\tau}^j(x_i)$, $j=1,2$, $i=1,\ldots,n$, is lying in $C^{\ell}(\R)$ and, for every $R>0$, there exists a constant $C_1>0$ depending only on $n$, $\Omega$ and $x_1,\ldots,x_n$ such that
\bel{t2bb}\norm{b^1-b^2}_{L^\infty((-R,R);\R^{n\times n})}\leq C_1\max_{j=1,\ldots,n}\sup_{\omega\in \mathbb S^{n-1}}\sup_{\lambda\in[-R,R]}\abs{\partial_\tau(\partial_{\nu_{a^1}} u_{\lambda,\omega,\tau}^1(x_j)-\partial_{\nu_{a^2}} u_{\lambda,\omega,\tau}^2(x_j))|_{\tau=0}}.\ee
In addition,  there exists   $C_2>0$ depending on $x_1,\ldots,x_n$, $\kappa$, $\psi$, $R$, $n$ and $\Omega$ such that, for all $N=1,\ldots,\ell-1$, we have
\bel{t2b} \begin{aligned}&\sup_{\lambda\in[-R,R]}\abs{D_\eta^{N} \gamma^1(\lambda,0)-D_\eta^{N} \gamma^2(\lambda,0)}_{T_N}\\
&\leq C_2\sum_{k=1}^{N+1}\left(\max_{j=1,\ldots,n}\sup_{\omega\in \mathbb S^{n-1}}\sup_{\lambda\in[-R,R]}\abs{\partial_\tau^k(\partial_{\nu_{a^1}} u_{\lambda,\omega,\tau}^1(x_j)-\partial_{\nu_{a^2}} u_{\lambda,\omega,\tau}^2(x_j))|_{\tau=0}}\right)^{\frac{3^k}{3^{N+1}}},\end{aligned}\ee
where $T_N$ denotes the space of tensors of rank $N$ of $\R^n$.
\end{theorem}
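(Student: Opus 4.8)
The plan is to expand the solution $u_{\lambda,\omega,\tau}$ in powers of $\tau$ about $\tau=0$ and to read off the unknown coefficients order by order from the $\tau$-derivatives of the flux. By Proposition \ref{p1} the map $\tau\mapsto u_{\lambda,\omega,\tau}\in C^{2+\alpha}(\overline{\Omega})$ is of class $C^\ell$; composing with the $C^{3+\ell}$ coefficient $a^j$ and evaluating the conormal derivative at a fixed boundary point shows that $\tau\mapsto\partial_{\nu_{a^j}}u^j_{\lambda,\omega,\tau}(x_i)$ is $C^\ell$, which is the first assertion. Write $v_k=\partial_\tau^k u_{\lambda,\omega,\tau}|_{\tau=0}$. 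Since $\tau=0$ imposes the constant Dirichlet datum $\lambda$, we have $u_{\lambda,\omega,0}\equiv\lambda$, so $v_0=\lambda$ and $\nabla v_0=0$; linearising \eqref{eq1} at this constant state gives the constant-coefficient elliptic operator $-\mathrm{div}(a(\lambda,0)\nabla\,\cdot\,)=-\mathrm{div}(b(\lambda)\nabla\,\cdot\,)$, where I used $\gamma(\lambda,0)=1$ from \eqref{t2aa}. As the affine function $x\cdot\omega$ lies in the kernel of this operator and matches the first-order boundary datum, uniqueness forces $v_1=x\cdot\omega$ and $\nabla v_1\equiv\omega$, independently of $a^j$.

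First I would establish \eqref{t2bb}. Differentiating the flux once at $\tau=0$ and using $\nabla v_0=0$ kills every term except $\partial_\tau\big(\partial_{\nu_a}u_{\lambda,\omega,\tau}\big)|_{\tau=0}(x)=\nu(x)^Tb(\lambda)\omega$. Setting $B(\lambda)=b^1(\lambda)-b^2(\lambda)$, the right-hand side of \eqref{t2bb}, call it $\delta_1$, bounds $|\nu(x_j)^TB(\lambda)\omega|$ for every $j,\omega,\lambda$. Condition \emph{(H)} guarantees that the matrix $M$ with columns $\nu(x_1),\ldots,\nu(x_n)$ differs from an orthogonal matrix by a perturbation of operator norm $<1$ (its Frobenius norm is $<1$ by \eqref{as}), hence $M$ is invertible with $\norm{M^{-1}}$ controlled by $n$ and $x_1,\ldots,x_n$ only. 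From $\sum_{j}|\nu(x_j)^TB(\lambda)\omega|^2=|M^TB(\lambda)\omega|^2$ I get $|B(\lambda)\omega|\leq C\delta_1$; taking the supremum over $\omega\in\mathbb{S}^{n-1}$ and $\lambda\in[-R,R]$ yields \eqref{t2bb}, with $C_1$ depending only on $n$, $\Omega$ and $x_1,\ldots,x_n$.

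For \eqref{t2b} I would run the Faà di Bruno expansion of the current $\sigma_\tau=\gamma(u_\tau,\nabla u_\tau)b(u_\tau)\nabla u_\tau$, whose divergence vanishes at every order. Two structural facts drive the induction. First, since $\gamma(\mu,0)\equiv 1$ every pure derivative $\partial_\mu^k\gamma(\lambda,0)$ vanishes, so the $\eta$-derivatives $D_\eta^j\gamma(\cdot,0)$ are the only unknowns entering beyond $b$. Second, matching the equation at order $\tau^{k}$ shows that $v_k$ solves $-\mathrm{div}(b(\lambda)\nabla v_k)=\mathrm{RHS}_k$ with homogeneous boundary data; crucially, the would-be top contribution $\big(D_\eta^{k-1}\gamma(\lambda,0)[\omega^{\otimes(k-1)}]\big)b(\lambda)\omega$ is constant in $x$ because $\nabla v_1\equiv\omega$, hence divergence-free, so it drops out of $\mathrm{RHS}_k$, which therefore depends only on $v_1,\dots,v_{k-1}$ and on $\mu$-derivatives of $b$ and of $D_\eta^j\gamma(\cdot,0)$ with $j\leq k-2$. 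Thus each $v_k$ is fixed by strictly lower-order data in the $\eta$-hierarchy, while the same top contribution does survive in the boundary flux: the only term of order $\tau^{N+1}$ carrying the target is $(N+1)\,\big(D_\eta^{N}\gamma(\lambda,0)[\omega^{\otimes N}]\big)\,\nu(x_j)^Tb(\lambda)\omega$. Passing to the difference of the two problems and writing $E_j=\sup_{\lambda}|D_\eta^j\gamma^1(\lambda,0)-D_\eta^j\gamma^2(\lambda,0)|$ and $\delta_k$ for the $k$-th data term in \eqref{t2b}, I obtain for the rank-$N$ tensor $T:=D_\eta^{N}\gamma^1(\lambda,0)-D_\eta^{N}\gamma^2(\lambda,0)$ a relation of the schematic form $T[\omega^{\otimes N}]\,\nu(x_j)^Tb^1(\lambda)\omega=O(\delta_{N+1})+(\text{terms built from }E_0,\dots,E_{N-1})$. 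Squaring, summing over $j$, and invoking ellipticity together with \emph{(H)}, which give $\sum_j|\nu(x_j)^Tb^1(\lambda)\omega|^2\geq c_0^2>0$, inverts the linear factor with no loss; polarising over $\omega\in\mathbb{S}^{n-1}$ then controls the whole tensor, leaving $E_N\leq C\big(\delta_{N+1}+F(E_0,\dots,E_{N-1})\big)$.

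The heart of the matter, and the source of the exponents $3^{k}/3^{N+1}$, is the term $F$. The lower-order contributions enter only through $\mu$-derivatives (up to second order) of the already reconstructed objects $b$ and $D_\eta^j\gamma(\cdot,0)$: these are known with the small bound $E_j$ in $L^\infty$ but are bounded a priori in $C^{3}$ through \eqref{t2a}. The Landau--Kolmogorov interpolation inequality, in the form $\norm{\partial_\mu^2 f}_{\infty}\leq C\norm{f}_\infty^{1/3}\norm{\partial_\mu^3 f}_\infty^{2/3}$, then turns each such contribution into $C\,E_j^{1/3}$; this is precisely why the theorem requires $a^j\in C^{3+\ell}$ and the a priori bound \eqref{t2a} up to order $3+\ell$. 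Consequently $F(E_0,\dots,E_{N-1})\leq C\,E_{N-1}^{1/3}$, and the recursion $E_N\leq C(\delta_{N+1}+E_{N-1}^{1/3})$, initialised by $E_0\leq C\delta_1$ from \eqref{t2bb}, unrolls to $E_N\leq C\sum_{k=1}^{N+1}\delta_k^{3^{k}/3^{N+1}}$, which is \eqref{t2b}. I expect the main obstacle to be the verification of the triangular structure — that at each order the unmeasured internal coefficients $v_k$ and all accompanying terms depend only on strictly lower-order data, so that the induction closes — together with the careful accounting of how many $\mu$-derivatives of each lower-order reconstruction are consumed, since it is this count that fixes the interpolation exponent and hence the final powers in \eqref{t2b}.
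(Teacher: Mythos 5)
Your proposal is correct and takes essentially the same route as the paper: for \eqref{t2bb} the first-order linearization $\partial_\tau(\partial_{\nu_{a}}u_{\lambda,\omega,\tau})|_{\tau=0}=b(\lambda)\omega\cdot\nu$ combined with the absorption argument supplied by condition (H), and for \eqref{t2b} the higher-order linearization in which the top contribution $D_\eta^{N}\gamma(\lambda,0)(\omega,\ldots,\omega)\,b(\lambda)\omega$ is constant in $x$, hence divergence-free and absent from the interior correctors but present in the flux, followed by Schauder control of the correctors, interpolation producing the $1/3$ loss per step, the ellipticity lower bound $|b^1(\lambda)\omega|\geq c$, and polarization. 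Your repackagings (invertibility of the matrix of normals instead of the paper's explicit absorption inequality with the basis $\{e_1',\ldots,e_n'\}$, and Landau--Kolmogorov in $L^\infty$ in place of the paper's two-step $L^\infty$--$L^2$--$H^2$ interpolation) are cosmetic, and your closing caveat about the bookkeeping for general $N$ matches the paper, which likewise writes out only the case $N=1$ and asserts the iteration.
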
 
In a similar way to Corollary \ref{c2}, we can improve this stability estimate with measurement at one point when the quasilinear term $a$ takes the form \eqref{cond}.

\begin{cor}\label{c3} Let the condition of Theorem \ref{t2} be fulfilled  and assume that, for $j=1,2$, $a^j\in C^{3+\ell}(\R\times\R^n;\R^{n\times n})$ takes the form \eqref{cond} with $\gamma=\gamma^j\in  C^{3+\ell}(\R\times\R^n;\R_+)$. Then, for any arbitrary chosen $x_0\in\partial\Omega$ and  $R>0$, there exists a constant $C_1>0$ depending only on $n$  such that
\bel{c3a}\sup_{\lambda\in[-R,R]}\abs{\gamma^1(\lambda,0)-\gamma^2(\lambda,0)}\leq C_1\sup_{\omega\in \mathbb S^{n-1}}\sup_{\lambda\in[-R,R]}\abs{\partial_\tau(\partial_{\nu_{a^1}} u_{\lambda,\omega,\tau}^1(x_0)-\partial_{\nu_{a^2}} u_{\lambda,\omega,\tau}^2(x_0))|_{\tau=0}}.\ee
In addition,  there exists   $C_2>0$ depending on $x_0$, $\kappa$, $\psi$, $R$, $n$ and $\Omega$ such that, for all $N=1,\ldots,\ell-1$, we have
\bel{c3b} \begin{aligned}&\sup_{\lambda\in[-R,R]}\norm{D_\eta^{N} \gamma^1(\lambda,0)-D_\eta^{N} \gamma^2(\lambda,0)}_{T_N}\\
&\leq C_2\sum_{k=1}^{N+1}\left(\sup_{\omega\in \mathbb S^{n-1}}\sup_{\lambda\in[-R,R]}\abs{\partial_\tau^k(\partial_{\nu_{a^1}} u_{\lambda,\omega,\tau}^1(x_0)-\partial_{\nu_{a^2}} u_{\lambda,\omega,\tau}^2(x_0))|_{\tau=0}}\right)^{\frac{3^k}{3^{N+1}}}.\end{aligned}\ee
\end{cor}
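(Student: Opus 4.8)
The plan is to specialize the asymptotic reconstruction scheme behind Theorem \ref{t2} to the isotropic family \eqref{cond} and to observe that, in that case, the $n$ measurement points required by condition (H) collapse to the single point $x_0$, exactly as Corollary \ref{c2} improves Theorem \ref{t1}. First I would record that \eqref{cond} is the normalized form \eqref{t2aa}: setting $b^j(\mu):=\gamma^j(\mu,0)\,\mathrm{Id}$ and $\tilde\gamma^j(\mu,\eta):=\gamma^j(\mu,\eta)/\gamma^j(\mu,0)$, one has $a^j=\tilde\gamma^j b^j$ with $\tilde\gamma^j(\mu,0)=1$, and $D_\eta^N\gamma^j(\lambda,0)=\gamma^j(\lambda,0)\,D_\eta^N\tilde\gamma^j(\lambda,0)$. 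Hence controlling $\gamma^j(\lambda,0)$ together with $D_\eta^N\tilde\gamma^j(\lambda,0)$, and recombining the two bounds using the a priori control \eqref{t2a}, produces \eqref{c3a} and \eqref{c3b}.

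The mechanism that makes one point enough is visible already at first order. By Proposition \ref{p1} the solution at $\tau=0$ is the constant $\lambda$, so $v_1:=\partial_\tau u_{\lambda,\omega,\tau}|_{\tau=0}$ is harmonic with boundary value $x\cdot\omega$, whence $v_1(x)=x\cdot\omega$ and $\nabla v_1=\omega$. Writing the flux as $\partial_{\nu_a}u=\gamma(u,\nabla u)\,\partial_\nu u$ and using $\partial_\nu u|_{\tau=0}=0$, one differentiation in $\tau$ leaves
\[
\partial_\tau\big(\partial_{\nu_a}u_{\lambda,\omega,\tau}(x_0)\big)\big|_{\tau=0}=\gamma(\lambda,0)\,\big(\omega\cdot\nu(x_0)\big).
\]
Choosing the admissible direction $\omega=\nu(x_0)\in\mathbb S^{n-1}$ turns the scalar factor into $1$, so the difference of the two first-order fluxes equals $\gamma^1(\lambda,0)-\gamma^2(\lambda,0)$ for this $\omega$; bounding by the supremum over $\omega$ yields \eqref{c3a} with a constant depending only on $n$. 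This is precisely the single-point counterpart of the recovery of $b$ in Theorem \ref{t2}: there $n$ independent normals were needed to read off a genuine matrix, whereas a scalar coefficient is seen in one direction.

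For \eqref{c3b} I would differentiate the flux $N+1$ times in $\tau$, evaluate at $x_0$, and use the recursively defined correctors $v_k=\partial_\tau^k u_{\lambda,\omega,\tau}|_{\tau=0}$ supplied by Proposition \ref{p1}. Each such derivative is a polynomial in $\omega$; its homogeneous degree-$(N+1)$ part contains the contraction $\big(D_\eta^N\gamma(\lambda,0)\cdot\omega^{\otimes N}\big)\big(\omega\cdot\nu(x_0)\big)$, the remaining terms being built from the lower-order derivatives $D_\eta^k\gamma(\lambda,0)$ with $k<N$, from $\mu$-derivatives of $\gamma$, and from products of the lower correctors (whose norms are controlled through \eqref{t2a}). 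The decisive point, again special to \eqref{cond}, is that the direction spanning the tensor contraction is $\omega$ and not the fixed normal $\nu(x_0)$: as $\omega$ ranges over $\mathbb S^{n-1}$, polarization of the degree-$(N+1)$ part recovers the whole symmetric tensor $D_\eta^N\gamma(\lambda,0)$ from the single point. Proceeding inductively on $N$ — the lower-degree parts of the data fixing the lower-order and $\mu$-derivative contributions, which are then subtracted — and handling the bounded-but-not-small corrector products by the same interpolation as in the proof of \eqref{t2b} produces the Hölder estimate with the compounding exponents $3^k/3^{N+1}$.

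The main obstacle is this single-point disentangling: one must verify that, once the contributions of $\gamma(\lambda,0)$ and of the $D_\eta^k\gamma(\lambda,0)$, $k<N$ (which also populate the lower-degree parts of the very same measured polynomial), have been removed, the residual homogeneous part is exactly the polarizable tensor $D_\eta^N\gamma(\lambda,0)$, and that the polarization map from the values of that part on $\mathbb S^{n-1}$ to rank-$N$ symmetric tensors is boundedly invertible with constant depending only on $n$, so that fixing $\nu(x_0)$ costs no information. Tracking these subtractions quantitatively, together with the interpolation controlling the nonlinear corrector products, is what yields the constants $C_1,C_2$ and the stated exponents.
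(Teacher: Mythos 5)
Your proposal is correct and takes essentially the paper's route: the paper likewise specializes the machinery of Theorem \ref{t2} (the expansions of Lemmas \ref{l1} and \ref{l2}, the interpolation bounds on the corrector terms, and the iteration in $N$) to the isotropic form \eqref{cond}, reads off $\left(\gamma^1(\lambda,0)-\gamma^2(\lambda,0)\right)\nu(x_0)\cdot\omega$ at first order and chooses $\omega=\nu(x_0)$ to get \eqref{c3a}, then iterates and concludes \eqref{c3b} by polarization. If anything, your explicit handling of directions $\omega$ nearly tangent to $\partial\Omega$ at $x_0$ --- recovering the tensor from $D_\eta^{N}\gamma(\lambda,0)(\omega,\ldots,\omega)\,(\omega\cdot\nu(x_0))$ via bounded invertibility of multiplication by the linear form $\omega\cdot\nu(x_0)$ on the finite-dimensional space of homogeneous polynomials --- is more careful than the paper, which treats this degeneracy quantitatively only implicitly (the soft limiting argument appears in the uniqueness Corollary \ref{c2}) under the phrase ``repeating the iteration process''.
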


As far as we know, in all other relevant results related to our inverse problem, that can be found for instance in the articles  \cite{C,EPS2,K,MU,Sh}, the determination of the quasilinear term $a$  has been considered from Neumann boundary measurements restricted to an open subset of $\partial\Omega$ associated with Dirichlet excitations lying in an infinite dimensional space. In Theorem \ref{t1} and \ref{t2}, we improve these results by restricting the Dirichlet excitations to the space of  affine functions of $\R^n$ taking values in $\R$, which is a space of dimension $n+1$, and we consider Neumann measurements restricted to at most $n$ points for the determination of general quasilinear terms depending simultaneously on the solutions and the gradient of the solutions of \eqref{eq1}. Actually, Theorem \ref{t1} and \ref{t2} seem to be the first results of determination of nonlinear terms appearing in an elliptic equation from such important restriction of the Cauchy data. It seems also that these results are the first addressing the determination of general matrix valued quasilinear terms, depending simultaneously on the solutions and the gradient of the solutions, that are not taking the form  \eqref{cond}. In that sense our results provide an important improvement, in terms of restriction of the data and generalization, for the resolution of this class of inverse problems associated with non-linear terms independent of the space variable. In addition, the Dirichlet excitations are given by explicit class of affine functions and we derive in Theorem \ref{t2} some H\"older stability estimates for our inverse problems. This makes our theoretical results more flexible for some potential applications to the numerical reconstruction of these class of quasilinear terms by different iterative method such as Tikhonov regularization (see e.g. \cite{CY} for more details).

One of the key ingredient in the proof of our main results, is given by the asymptotic properties of the data \eqref{data} as $\tau\to0$. We prove these properties by combining Taylor formula with Lemma \ref{l1} and \ref{l2}. Our approach can be compared with the linearization techniques introduced by \cite{Is1,KLU} and applied for the resolution of most inverse problems of determining nonlinear terms for elliptic equations (see e.g. \cite{CFKKU,C,FO20,ImYa,IN,IS,KrU1,KrU2,LLLS,Sun1,SuUh}). Nevertheless, in contrast to all these works, we do not use this approach in order to transform our inverse problems into a linearized problem stated in terms of density or concentration of products of solutions of linear elliptic equations. Instead, we use this linearization technique in order to derive explicit asymptotic properties of the data \eqref{data}, as $\tau\to0$, and we use such asymptotic properties for solving our inverse problem. As far as we know, this work seems to be the first where the higher order linearization technique is used for such purpose.

As stated in Corollary \ref{c1}, the result of Theorem \ref{t1} leads to the full unique determination of the quasilinear term $a$ under an assumption of analiticity imposed to the quasilinear term $a$ with respect to the variable $\eta\in\R^n$ associated with the gradient of the solutions. This seems to be the first result of full determination of a matrix valued quasilinear term, depending simultaneously on the solutions and the gradient of the solutions. However, our approach works for quasilinear terms $a$ of the form \eqref{a1}. It is not yet clear how such restriction can be removed in the proof of Theorem \ref{t1} and the determination of a general quasilinear term satisfying only conditions \eqref{ell1}-\eqref{ell2} remains an open problem.

Let us remark that Theorem \ref{t1} and \ref{t2} are subjected to the geometrical condition (H) which will be fulfilled for rather general class of bounded open set $\Omega$ (balls, annulus...). In addition, the condition \eqref{as} allows rather flexible choice for the $n$ points $x_1,\ldots,x_n$ of $\partial\Omega$ where the measurements are made.

Let us observe that the results of Theorem \ref{t1} and \ref{t2} are drastically improved in Corollary \ref{c2} and \ref{c3} for quasilinear terms of the form \eqref{cond}, where the measurements can be restricted to any arbitrary point of $\partial\Omega$.

Let us mention that, in a similar way to \cite{K}, we derive in \eqref{t2bb} and \eqref{c3a}, Lipschitz stability estimates with a
constant completely independent of the nonlinear term $a^j$, $j=1,2$. This means that these two estimates are not  conditional stability estimates requiring  \textit{a priori} estimates of the unknown parameter. On the other hand, the stability estimates \eqref{t2b} and 
\eqref{c3b} are H\"older stability estimates requiring the \textit{a priori} estimate \eqref{t2a}. This difference is due to the fact that we derive the estimates \eqref{t2b} and  \eqref{c3b} by mean of an iterative process involving some results of interpolation.

This article is organized as follows. In Section 3, we recall some properties of \eqref{eq1} including the unique existence of solutions and we derive some explicit asymptotic properties of the data \eqref{data} as $\tau\to0$. Using these properties, we show in Section 4 our uniqueness results stated in Theorem \ref{t1} and Corollary \ref{c2}. Section 5, will be devoted to the proof of the stability estimates stated in Theorem \ref{t2} and Corollary \ref{c3}.
\section{Preliminary properties}
In this section we consider several properties including the unique existence of solutions of \eqref{eq1} as well as some asymptotic properties of the data under consideration in our inverse problems.
The unique existence  of solutions of \eqref{eq1}  can be stated as follows.

\begin{prop}\label{p1} We assume that $a\in C^{2+\ell}(\R\times\R^n)$, with $\ell\in\mathbb N$, satisfies  the conditions \eqref{ell1}-\eqref{ell2}.  For all $\lambda\in\R$ and all $\omega\in\mathbb S^{n-1}$, there exists $\epsilon_{\lambda}>0$ depending on  $a$,  $\Omega$,  and $\lambda$,  such that for all $\tau\in(-\epsilon_{\lambda},\epsilon_{\lambda})$, problem \eqref{eq1} admits a unique solution $u_{\lambda,\omega,\tau}\in C^{2+\alpha}(\overline{\Omega})$. In addition, the map $\tau\mapsto u_{\lambda,\omega,\tau}$ is lying in $ C^{\ell}((-\epsilon_{\lambda},\epsilon_{\lambda});C^{2+\alpha}(\overline{\Omega}))$. 
\end{prop}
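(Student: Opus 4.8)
The plan is to set up a fixed-point/implicit-function-theorem argument parametrized by the small boundary coefficient $\tau$, anchored at the explicit constant solution that exists when $\tau=0$. First I would observe that when $\tau=0$ the boundary condition reduces to $u=\lambda$ on $\partial\Omega$, and the constant function $u\equiv\lambda$ solves \eqref{eq1} identically, since the gradient vanishes and the equation has zero right-hand side. This constant solution $u_0=\lambda$ is the natural base point: it lies in $C^{2+\alpha}(\overline\Omega)$, it is nondegenerate in the sense that the ellipticity constant $\kappa(\lambda,0)>0$ is strictly positive, and its gradient $\nabla u_0=0$ sits in the interior of the domain where $a$ is $C^{2+\ell}$. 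All the smoothness and coercivity one needs is therefore available in a neighborhood of $(\lambda,0)\in\R\times\R^n$.

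Next I would linearize the quasilinear operator around $u_0$. Writing the nonlinear map $F(u,\tau)$ that sends a function $u$ with boundary value $\lambda+\tau x\cdot\omega$ to the divergence expression in \eqref{eq1}, the Fréchet derivative $D_uF$ at $(u_0,0)$ is the second-order linear elliptic operator $v\mapsto -\sum_{i,j}\partial_{x_i}\!\left(a_{i,j}(\lambda,0)\,\partial_{x_j}v\right)$ with Dirichlet data, whose coefficients are the constants $a_{i,j}(\lambda,0)$ satisfying the uniform ellipticity \eqref{ell2} with constant $\kappa(\lambda,0)$. To handle the inhomogeneous boundary condition cleanly, I would subtract the affine lift $\ell_\tau(x):=\lambda+\tau x\cdot\omega$, which already belongs to $C^{2+\alpha}(\overline\Omega)$, and work with the homogeneous-boundary unknown $w:=u-\ell_\tau$; this converts \eqref{eq1} into $\widetilde F(w,\tau)=0$ with $\widetilde F(0,0)=0$ and $D_w\widetilde F(0,0)$ equal to the same constant-coefficient elliptic operator acting on $C^{2+\alpha}_0(\overline\Omega)$. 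By the Schauder theory, this linearized operator is an isomorphism from $\{w\in C^{2+\alpha}(\overline\Omega):\ w|_{\partial\Omega}=0\}$ onto $C^{\alpha}(\overline\Omega)$. The regularity hypothesis $a\in C^{2+\ell}$ guarantees that $\widetilde F$ is a $C^\ell$ map between the relevant Hölder spaces; the implicit function theorem in Banach spaces then yields, for each fixed $(\lambda,\omega)$, a number $\epsilon_\lambda>0$ and a unique $C^\ell$ branch $\tau\mapsto w_{\lambda,\omega,\tau}$ on $(-\epsilon_\lambda,\epsilon_\lambda)$ with $w_{\lambda,\omega,0}=0$, whence $u_{\lambda,\omega,\tau}=w_{\lambda,\omega,\tau}+\ell_\tau\in C^{2+\alpha}(\overline\Omega)$ solves \eqref{eq1} and depends on $\tau$ in a $C^\ell$ fashion. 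Uniqueness in a small ball follows from the local uniqueness built into the implicit function theorem.

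The main obstacle is twofold. First, one must verify that $\widetilde F$ genuinely maps Hölder spaces into Hölder spaces with $C^\ell$ dependence: this is a Nemytskii/superposition-operator computation requiring that the coefficients $a_{i,j}(u,\nabla u)\,\partial_{x_j}u$, after the divergence, land in $C^{\alpha}(\overline\Omega)$ and differentiate $\ell$ times in the parameter, which is exactly where the assumption $a\in C^{2+\ell}$ (rather than merely $C^2$) is consumed—the two extra derivatives compensate for the loss incurred by the divergence and by evaluating $a$ at the argument $(u,\nabla u)$ living in $C^{2+\alpha}\times C^{1+\alpha}$. Second, the dependence of $\epsilon_\lambda$ only on $a$, $\Omega$, and $\lambda$ (and not on $\omega$) must be tracked: since the base point $u_0=\lambda$ and the linearized operator's coefficients $a_{i,j}(\lambda,0)$ are independent of $\omega$, and the affine lift $\ell_\tau$ has $C^{2+\alpha}$ norm bounded uniformly in $\omega\in\mathbb S^{n-1}$, the quantitative constants controlling the neighborhood of invertibility and the size of the contraction can be chosen uniformly in $\omega$, giving a single threshold $\epsilon_\lambda$. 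I would record this uniformity explicitly, as it is what permits the measurements in \eqref{data} to range over all $\omega\in\mathbb S^{n-1}$ for a common $\tau$-interval.
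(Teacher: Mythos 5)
Your existence and smooth-dependence argument coincides in substance with the paper's: both anchor at the constant solution $u\equiv\lambda$, identify $D_wF$ at the base point as the constant-coefficient operator $v\mapsto-\sum_{i,j}a_{i,j}(\lambda,0)\partial_{x_i}\partial_{x_j}v$ (the lower-order terms drop because $\nabla u_0=0$), invoke Schauder theory (\cite[Theorem 6.8]{GT}) for the isomorphism, verify $C^\ell$ smoothness of the superposition map (the paper does this via \cite[Theorems A.7, A.8]{H}), and apply the implicit function theorem. The genuine gap is your treatment of uniqueness. The implicit function theorem delivers only \emph{local} uniqueness: the branch $\tau\mapsto w_{\lambda,\omega,\tau}$ is the unique solution in a small $C^{2+\alpha}$ ball around it. The proposition, however, asserts that \eqref{eq1} admits a unique solution in all of $C^{2+\alpha}(\overline{\Omega})$; your argument leaves open the possibility of a second solution far from the constant $\lambda$ in $C^{2+\alpha}$ norm, and nothing in the IFT machinery excludes it. The paper closes exactly this gap at the outset by invoking the comparison principle for quasilinear equations of divergence form (\cite[Theorem 9.5]{GT}), which yields uniqueness even among $C^1(\overline{\Omega})$ solutions, and only then turns to existence. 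You need some such global ingredient (a comparison or maximum principle for the quasilinear operator); ``uniqueness in a small ball'' does not deliver the stated result.

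A secondary, more cosmetic difference concerns the uniformity in $\omega$, which you correctly flag but propose to track through the quantitative constants of a contraction for each fixed $\omega$. The paper's device is cleaner: it applies the implicit function theorem with the boundary datum $f\in C^{2+\alpha}(\partial\Omega)$ itself as the parameter, obtaining a single $C^\ell$ solution map $\psi$ on a ball $B_r\subset C^{2+\alpha}(\partial\Omega)$ independent of $\omega$, and then sets $\epsilon_\lambda=r/C_0$ with $C_0\geq\sup_{\omega\in\mathbb S^{n-1}}\|x\cdot\omega\|_{C^{2+\alpha}(\partial\Omega)}$, so that $v_{\lambda,\omega,\tau}=\psi(\tau x\cdot\omega)$ for all $|\tau|<\epsilon_\lambda$ and all $\omega$ simultaneously. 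This makes the $\omega$-uniformity automatic rather than an assertion to be verified, and it also gives the $C^\ell$ regularity of $\tau\mapsto u_{\lambda,\omega,\tau}$ by composing $\psi$ with the affine map $\tau\mapsto\tau\, x\cdot\omega$. Your decomposition (subtracting the full affine lift $\ell_\tau$ rather than only the constant $\lambda$) is an equivalent reformulation and causes no difficulty.
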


\begin{proof} Note first that the comparison principles for such class of quasilinear equations of divergence form (see e.g. \cite[Theorem 9.5]{GT}) guaranty the uniqueness of solutions of \eqref{eq1} lying in $C^1(\overline{\Omega})$. Therefore, we only need to show the existence of such solutions and the last statement of the proposition. We follow the line of \cite[Proposition 2.1]{K} (see also \cite[Theorem B.1.]{CFKKU}), where a similar boundary value problem has been studied for more specific class of quasilinear terms $a$. 

Splitting $u_{\lambda,\omega,\tau}$ into two terms $u_{\lambda,\omega,\tau}=\lambda+v_{\lambda,\omega,\tau}$, we find that $v_{\lambda,\omega,\tau}$ solves
\bel{eq4}
\left\{
\begin{array}{ll}
-\sum_{i,j=1}^n \partial_{x_i} 
\left( a_{i,j}(\lambda+v_{\lambda,\omega,\tau}(x),\nabla  v_{\lambda,\omega,\tau}(x)) \partial_{x_j} v_{\lambda,\omega,\tau}(x)\right)=0  & x\in\Omega ,
\\
v_{\lambda,\omega,\tau}(x)=\tau x\cdot\omega &x\in\partial\Omega,
\end{array}
\right.
\ee
Therefore, we only need to prove that there exists $\epsilon_{\lambda}>0$ depending on  $a$,  $\lambda$, $\Omega$, such that, for all $\tau\in(-\epsilon_\lambda,\epsilon_\lambda)$, the problem \eqref{eq4} admits a  solution $v_{\lambda,\omega,\tau}\in C^{2+\alpha}(\overline{\Omega})$ satisfying
\bel{TA1v}\norm{v_{\lambda,\omega,\tau}}_{ C^{2+\alpha}(\overline{\Omega})}\leq C|\tau|.\ee
We define the map $\mathcal K$ from $ C^{2+\alpha}(\partial\Omega)\times C^{2+\alpha}(\overline{\Omega})$ to the space $ C^{\alpha}(\overline{\Omega})\times C^{2+\alpha}(\partial\Omega)$ by
$$\mathcal K: (f,v)\mapsto\left(-\sum_{i,j=1}^n \partial_{x_i} 
\left( a_{i,j}(\lambda+v,\nabla v) \partial_{x_j} v \right), v_{|\partial\Omega}-f\right).$$
 Using the fact that $a\in C^{2+\ell}(\R\times\R^n)$ and applying \cite[Theorem  A.7, A.8]{H},  for any $v\in  C^{2+\alpha}(\overline{\Omega})$, we have $x\mapsto \partial_\mu^k\partial_\eta^\alpha a_{i,j}(\lambda+v(x),\nabla v(x))\in  C^{1+\alpha}(\overline{\Omega})$, $i,j=1,\ldots,n$, $|\alpha|+k\leq\ell$. In view of \cite[Theorem  A.7]{H}, we obtain 
$$x\mapsto-\sum_{i,j=1}^n \partial_{x_i} 
\left( \partial_\mu^k\partial_\eta^\alpha a_{i,j}(\lambda+v(x),\nabla v(x)) \partial_{x_j} v(x)\right)\in  C^{\alpha}(\overline{\Omega}),\quad v\in  C^{2+\alpha}(\overline{\Omega})$$
and the map
$$ C^{2+\alpha}(\overline{\Omega})\ni v\mapsto -\sum_{i,j=1}^n \partial_{x_i}\left( a_{i,j}(\lambda+v(x),\nabla v(x)) \partial_{x_j} v\right)\in  C^{\alpha}(\overline{\Omega})$$
is $ C^\ell$. It follows  that the map $\mathcal K$ is  $ C^\ell$ from $ C^{2+\alpha}(\partial\Omega)\times C^{2+\alpha}(\overline{\Omega})$ to the space $ C^{\alpha}(\overline{\Omega})\times C^{2+\alpha}(\partial\Omega)$. Moreover, we have $\mathcal K(0,0)=(0,0)$ and
$$\partial_v\mathcal K(0,0)w=\left(-\sum_{i,j=1}^n  
 a_{i,j}(\lambda,0) \partial_{x_i}\partial_{x_j} w, w_{|\partial\Omega}\right).$$
In view of \cite[Theorem 6.8]{GT}, for any $(F,f)\in  C^{\alpha}(\overline{\Omega})\times C^{2+\alpha}(\partial\Omega)$, the following linear boundary value problem
$$
\left\{
\begin{array}{ll}
-\sum_{i,j=1}^n 
 a_{i,j}(\lambda,0) \partial_{x_i} \partial_{x_j} w =F  & \mbox{in}\ \Omega ,
\\
w=f &\mbox{on}\ \partial\Omega.
\end{array}
\right.$$
admits a unique solution $w\in  C^{2+\alpha}(\overline{\Omega})$ satisfying
$$\norm{w}_{ C^{2+\alpha}(\overline{\Omega})}\leq C(\norm{F}_{ C^{\alpha}(\overline{\Omega})}+\norm{f}_{ C^{2+\alpha}(\partial\Omega)}),$$
with $C>0$ depending only on  $a$, $\lambda$, $\Omega$. Thus, $\partial_v\mathcal K(0,0)$ is an isomorphism from $ C^{2+\alpha}(\overline{\Omega})$ to $ C^{\alpha}(\overline{\Omega})\times C^{2+\alpha}(\partial\Omega)$ and,  the implicit function theorem implies that there exists $r>0$ depending on  $a$, $\lambda$, $\Omega$, and a $ C^\ell$ map $\psi$ from $ B_{r}:=\{g\in C^{2+\alpha}(\partial\Omega):\ \norm{g}_{C^{2+\alpha}(\partial\Omega)}<r\}$ to $ C^{2+\alpha}(\overline{\Omega})$, such that, for all $f\in  B_{r}$, we have 
$\mathcal K(f,\psi(f))=(0,0)$. 
Thus, setting $C_0>0$ defined by
$$C_0=\sup_{x\in\overline{\Omega}}|x|+n$$
we have
$$\sup_{\omega\in\mathbb S^{n-1}}\norm{x\cdot\omega}_{C^{2+\alpha}(\partial\Omega)}\leq C_0$$
and fixing $\epsilon_{\lambda}=\frac{r}{C_0}$, for all $\tau\in  (-\epsilon_{\lambda},\epsilon_{\lambda})$, $v_{\lambda,\omega,\tau}=\psi(\tau x\cdot\omega)$ is a solution of \eqref{eq4}.  Using the fact that $\psi$ is $ C^\ell$ from $B_{r}$ to $ C^{2+\alpha}(\overline{\Omega})$ and $\psi(0)=0$, we obtain \eqref{TA1v} and we deduce that the map $\tau\mapsto u_{\lambda,\omega,\tau}=\lambda+\psi(\tau x\cdot\omega)$ is lying in $ C^\ell((-\epsilon_{\lambda},\epsilon_{\lambda});C^{2+\alpha}(\overline{\Omega}))$. This completes the proof of the proposition.\end{proof}

Using the results of Proposition \ref{p1}, we will give some asymptotic properties of the data $\partial_{\nu_a}u_{\lambda,\omega,\tau}(x)$, $x\in\partial\Omega$ as $\tau\to0$. In view of Taylor formula, provided that the map $\tau\mapsto\partial_{\nu_a}u_{\lambda,\omega,\tau}(x)$ is sufficiently smooth on a neighborhood of $\tau=0$, such asymptotic properties will be given by the expression
\bel{dev}\partial_\tau^k(\partial_{\nu_a}u_{\lambda,\omega,\tau}(x))|_{\tau=0},\quad k\in\mathbb N,\quad x\in\partial\Omega.\ee
Applying the results of Proposition \ref{p1}, we will show such a smoothness property of the map $\tau\mapsto\partial_{\nu_a}u_{\lambda,\omega,\tau}(x)$ and we will derive some explicit formula for \eqref{dev}. Our approach can be compared with the higher order linearization technique considered in a different way for  similar class of inverse problems for nonlinear equations (see e.g. \cite{CFKKU,FO20,IN,IS,KrU1,KrU2,LLLS,Sun1,SuUh}).

In view of Proposition \ref{p1}, if  $a\in C^{3}(\R\times\R^n;\R^{n\times n})$, for all $\lambda\in\R$, $\omega\in\mathbb S^{n-1}$ and all $\tau\in(-\epsilon_{\lambda},\epsilon_{\lambda})$, problem \eqref{eq1} admits a unique solution $u_{\lambda,\omega,\tau}\in C^{2+\alpha}(\overline{\Omega})$ and the map $\tau\mapsto u_{\lambda,\omega,\tau}$ is lying in $ C^{1}((-\epsilon_{\lambda},\epsilon_{\lambda});C^{2+\alpha}(\overline{\Omega}))$. 
 Using this property and the results of Proposition \ref{p1}, we obtain the following result.

\begin{lem}\label{l1} Assume that $a\in C^{3}(\R\times\R^n;\R^{n\times n})$ satisfies  the conditions \eqref{ell1}-\eqref{ell2}. Then, for all $\lambda\in\R$ and all $\omega\in\mathbb S^{n-1}$, the map $\tau\mapsto \partial_{\nu_a}  u_{\lambda,\omega,\tau}$ is lying in $C^1((-\epsilon_{\lambda,h},\epsilon_{\lambda,h});C^{1+\alpha}(\partial\Omega))$ and we have
\bel{l1a}\partial_\tau(\partial_{\nu_a}u_{\lambda,\omega,\tau}(x))|_{\tau=0} =a(\lambda,0)\omega\cdot \nu(x),\quad x\in\partial\Omega.\ee
\end{lem}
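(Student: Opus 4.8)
The plan is to differentiate both the equation \eqref{eq1} and the boundary condition with respect to the parameter $\tau$ at $\tau=0$, exploiting the fact that the background solution is constant there. First I would observe that, since the constant function $\lambda$ solves \eqref{eq1} with $\tau=0$, the uniqueness part of Proposition \ref{p1} forces $u_{\lambda,\omega,0}=\lambda$, so that $\nabla u_{\lambda,\omega,0}=0$ and $a_{i,j}(u_{\lambda,\omega,0},\nabla u_{\lambda,\omega,0})=a_{i,j}(\lambda,0)$ throughout $\overline\Omega$. The regularity statement follows from Proposition \ref{p1}: the map $\tau\mapsto u_{\lambda,\omega,\tau}$ is $C^1$ into $C^{2+\alpha}(\overline\Omega)$, and composing with the $C^1$ Nemytskii-type map $u\mapsto\sum_{i,j}a_{i,j}(u,\nabla u)\partial_{x_j}u\in C^{1+\alpha}(\overline\Omega)$ (whose smoothness is handled exactly as in the proof of Proposition \ref{p1} via \cite[Theorem A.7, A.8]{H}, using $a\in C^3$) followed by contraction with $\nu$ and restriction to $\partial\Omega$ yields a $C^1$ map into $C^{1+\alpha}(\partial\Omega)$.

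Next I would set $v:=\partial_\tau u_{\lambda,\omega,\tau}|_{\tau=0}\in C^{2+\alpha}(\overline\Omega)$ and linearize. Differentiating the interior equation in $\tau$ and using the product and chain rules gives, for each $(i,j)$,
$$\partial_\tau\big(a_{i,j}(u_{\lambda,\omega,\tau},\nabla u_{\lambda,\omega,\tau})\partial_{x_j}u_{\lambda,\omega,\tau}\big)=\Big(\partial_\mu a_{i,j}\,\partial_\tau u+\sum_k\partial_{\eta_k}a_{i,j}\,\partial_{x_k}\partial_\tau u\Big)\partial_{x_j}u+a_{i,j}(u,\nabla u)\,\partial_{x_j}\partial_\tau u.$$
The crucial simplification is that at $\tau=0$ one has $\partial_{x_j}u_{\lambda,\omega,0}=0$, so the entire first group of terms, being multiplied by $\partial_{x_j}u$, drops out. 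Hence $v$ solves the linear constant-coefficient Dirichlet problem $-\sum_{i,j}a_{i,j}(\lambda,0)\partial_{x_i}\partial_{x_j}v=0$ in $\Omega$ with $v=x\cdot\omega$ on $\partial\Omega$, the boundary condition coming from differentiating $u_{\lambda,\omega,\tau}=\lambda+\tau x\cdot\omega$ in $\tau$.

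The key observation is then that $v(x)=x\cdot\omega$ itself solves this problem: it is affine, hence annihilated by the second-order constant-coefficient operator, and it matches the boundary data. By ellipticity \eqref{ell2} of the matrix $(a_{i,j}(\lambda,0))$ and uniqueness for linear elliptic Dirichlet problems \cite[Theorem 6.8]{GT}, we conclude $v(x)=x\cdot\omega$ on $\overline\Omega$, so $\nabla v=\omega$. Substituting $\partial_{x_j}v=\omega_j$ into the surviving term above and contracting with $\nu_i(x)$ yields $\partial_\tau(\partial_{\nu_a}u_{\lambda,\omega,\tau}(x))|_{\tau=0}=\sum_{i,j}a_{i,j}(\lambda,0)\omega_j\nu_i(x)=a(\lambda,0)\omega\cdot\nu(x)$, which is \eqref{l1a}.

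I expect the main obstacle to be bookkeeping rather than conceptual: justifying that one may differentiate the flux in $\tau$ and interchange this operation with the spatial derivatives and the boundary restriction, which rests entirely on the $C^1$-in-$\tau$ regularity from Proposition \ref{p1} and the composition results already invoked there. Once the vanishing of $\nabla u$ at $\tau=0$ is used to eliminate the coefficient-derivative terms, the identification $v=x\cdot\omega$ is immediate and the formula \eqref{l1a} follows by direct substitution.
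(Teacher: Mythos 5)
Your proposal is correct and follows essentially the same route as the paper's proof: both rely on Proposition \ref{p1} for the $C^1$ dependence on $\tau$, use $u_{\lambda,\omega,0}\equiv\lambda$ (hence $\nabla u_{\lambda,\omega,0}\equiv 0$) to eliminate the coefficient-derivative terms in the linearization, and identify $\partial_\tau u_{\lambda,\omega,\tau}|_{\tau=0}=x\cdot\omega$ via uniqueness for the constant-coefficient Dirichlet problem \eqref{eq3} before substituting into the flux. No gaps to report.
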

\begin{proof} Fix $\lambda\in\R$. In view of Proposition \ref{p1}, we have  $\tau\mapsto u_{\lambda,\omega,\tau}\in C^1((-\epsilon_{\lambda},\epsilon_{\lambda});C^{2+\alpha}(\overline{\Omega}))$ which implies that $\tau\mapsto \partial_{\nu_a}  u_{\lambda,\omega,\tau}\in C^1((-\epsilon_{\lambda,h},\epsilon_{\lambda,h});C^{1+\alpha}(\partial\Omega))$. In addition, using the fact that $u_{\lambda,\omega,\tau}|_{\tau=0}=\lambda$, we obtain $\nabla u_{\lambda,\omega,\tau}|_{\tau=0}\equiv 0$ and it follows
 \bel{l1b}\partial_\tau(\partial_{\nu_a}u_{\lambda,\omega,\tau})|_{\tau=0}=\sum_{i,j=1}^na_{i,j}(\lambda,0)\partial_{x_j}\partial_\tau u_{\lambda,\omega,\tau}|_{\tau=0}\nu_i=a(\lambda,0)\nabla \partial_\tau u_{\lambda,\omega,\tau}|_{\tau=0}\cdot \nu,\ \textrm{ on $\partial\Omega$}.\ee On the other hand, we have
$$\partial_\tau \sum_{i,j=1}^n \partial_{x_i} 
\left( a_{i,j}(u_{\lambda,\omega,\tau},\nabla u_{\lambda,\omega,\tau}) \partial_{x_j} u_{\lambda,\omega,\tau} \right)|_{\tau=0}=\sum_{i,j=1}^n 
 a_{i,j}(\lambda,0) \partial_{x_i} \partial_{x_j}\partial_\tau u_{\lambda,\omega,\tau}|_{\tau=0}$$
and $\partial_\tau u_{\lambda,\omega,\tau}|_{\tau=0}=x\cdot\omega$ on $\partial\Omega$. Therefore, $w_\lambda=\partial_\tau u_{\lambda,\omega,\tau}|_{\tau=0}$ solves the boundary value problem
\bel{eq3}
\left\{
\begin{array}{ll}
-\sum_{i,j=1}^n 
 a_{i,j}(\lambda,0) \partial_{x_i} \partial_{x_j} w_\lambda =0  & \mbox{in}\ \Omega ,
\\
w_\lambda(x)=x\cdot\omega &x\in \partial\Omega.
\end{array}
\right.
\ee
Recalling that $x\mapsto x\cdot\omega$ solves the problem \eqref{eq3} and using the uniqueness of the solution of this problem, we deduce that 
$$\partial_\tau u_{\lambda,\omega,\tau}|_{\tau=0}(x)=w_\lambda(x)=x\cdot\omega,\quad x\in\overline{\Omega}.$$
Combining this with \eqref{l1b}, we deduce  \eqref{l1a}.

\end{proof}

Applying Lemma \ref{l1} we obtain the asymptotic expansion of $\partial_{\nu_a}u_{\lambda,\omega,\tau}(x)$, $x\in\partial\Omega$, at order 1 in $\tau$ as $\tau\to0$. In order to extend this asymptotic property, we will need explicit formula for \eqref{dev} with $k\geq2$.
 For this purpose, let us assume that $a\in C^{2+N}(\R\times\R^n;\R^{n\times n})$, $N\geq2$. In view of Proposition \ref{p1}, this assumption implies that the map $\tau\mapsto\partial_{\nu_a}u_{\lambda,\omega,\tau}(x)$, $x\in\partial\Omega$, is lying  in $C^N(-\epsilon_{\lambda},\epsilon_{\lambda})$ and we can consider  the following boundary value problem

\bel{eq6}
\left\{
\begin{array}{ll}
-\sum_{i,j=1}^n 
 a_{i,j}(\lambda,0) \partial_{x_i} \partial_{x_j} y_\lambda =\nabla\cdot K_\lambda  & \mbox{in}\ \Omega ,
\\
y_\lambda=0 &\mbox{on}\ \partial\Omega,
\end{array}
\right.
\ee
with
$$K_\lambda=\partial_{\tau}^N a(u_{\lambda,\omega,\tau},\nabla u_{\lambda,\omega,\tau}) \nabla u_{\lambda,\omega,\tau} |_{\tau=0}-a(\lambda,0) \nabla \partial_{\tau}^Nu_{\lambda,\omega,\tau}|_{\tau=0}.$$

Then, using some arguments of Lemma \ref{l1}, we can show by iteration the following.

\begin{lem}\label{l2}  Let $a\in C^{2+N}(\R\times\R^n;\R^{n\times n})$, $N\geq2$, and consider the solution $y_\lambda$ of \eqref{eq6}. Then, , for all $\lambda\in\R$ and all $\omega\in\mathbb S^{n-1}$, the map $\tau\mapsto \partial_{\nu_a}  u_{\lambda,\omega,\tau}$ is lying in $C^N((-\epsilon_{\lambda,h},\epsilon_{\lambda,h});C^{1+\alpha}(\partial\Omega))$ and we find
\bel{l2b} \begin{aligned}&\partial_\tau^N(\partial_{\nu_a}u_{\lambda,\omega,\tau}(x))|_{\tau=0}\\
&=N D_\eta^{N-1}a(\lambda,0)(\omega,\ldots,\omega)\omega\cdot\nu(x)+H_\lambda(x)+\sum_{i,j=1}^n 
 a_{i,j}(\lambda,0)  \partial_{x_j}y_\lambda\nu_i(x),\quad x\in\partial\Omega\end{aligned}\ee
where  the expression $H_\lambda$ and  $y_\lambda$ depend  only on $\omega$, $\lambda$, $\Omega$ and $\partial_\mu^j D^k_{\eta}a(\lambda,0)$,  $k=0,\ldots,N-2$ and $k+j\leq N-1$. 
\end{lem}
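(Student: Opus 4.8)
The plan is to differentiate the flux $\partial_{\nu_a}u_{\lambda,\omega,\tau}(x)$ in $\tau$ exactly $N$ times at $\tau=0$ and to identify the boundary value problem satisfied by the $N$-th derivative $U^{(N)}_\lambda:=\partial_\tau^N u_{\lambda,\omega,\tau}|_{\tau=0}$. By Proposition \ref{p1}, the map $\tau\mapsto u_{\lambda,\omega,\tau}$ lies in $C^N((-\epsilon_\lambda,\epsilon_\lambda);C^{2+\alpha}(\overline\Omega))$ when $a\in C^{2+N}$, which legitimizes all the differentiations below and gives the regularity $\tau\mapsto\partial_{\nu_a}u_{\lambda,\omega,\tau}\in C^N((-\epsilon_\lambda,\epsilon_\lambda);C^{1+\alpha}(\partial\Omega))$. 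First I would apply $\partial_\tau^N|_{\tau=0}$ to the interior equation $-\sum_{i,j}\partial_{x_i}(a_{i,j}(u,\nabla u)\partial_{x_j}u)=0$ using the general Leibniz/Fa\`a di Bruno rule. The crucial structural fact, inherited from Lemma \ref{l1}, is that $u_{\lambda,\omega,\tau}|_{\tau=0}=\lambda$ so that $\nabla u_{\lambda,\omega,\tau}|_{\tau=0}\equiv 0$; hence every factor $\partial_\tau^m u|_{\tau=0}$ with $m\geq1$ contributes, while the zeroth-order evaluation sits at $(\lambda,0)$, which is exactly where the coefficients $a_{i,j}(\lambda,0)$ and the derivatives $\partial_\mu^j D_\eta^k a(\lambda,0)$ live.

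Next I would separate, in the expansion of $\partial_\tau^N\sum_{i,j}\partial_{x_i}(a_{i,j}(u,\nabla u)\partial_{x_j}u)|_{\tau=0}$, the two terms containing the highest derivative $U^{(N)}_\lambda$. One is $\sum_{i,j}a_{i,j}(\lambda,0)\partial_{x_i}\partial_{x_j}U^{(N)}_\lambda$, coming from differentiating the factor $\partial_{x_j}u$ all $N$ times while freezing the coefficient at $(\lambda,0)$; the other is $\nabla\cdot\bigl(\partial_\tau^N a(u,\nabla u)\nabla u|_{\tau=0}\bigr)$, which is the divergence of the term where the coefficient is differentiated. All remaining contributions involve only $\partial_\tau^m u|_{\tau=0}$ with $1\leq m\leq N-1$ together with derivatives $\partial_\mu^j D_\eta^k a(\lambda,0)$, $k\leq N-2$, $j+k\leq N-1$; these are lower-order quantities that by induction depend only on the listed data. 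Rearranging so that the two $U^{(N)}_\lambda$-terms are moved to the left and everything else to the right produces precisely the elliptic problem \eqref{eq6} for $y_\lambda$, with the right-hand side $\nabla\cdot K_\lambda$ where $K_\lambda=\partial_\tau^N a(u,\nabla u)\nabla u|_{\tau=0}-a(\lambda,0)\nabla U^{(N)}_\lambda|_{\tau=0}$; in other words $U^{(N)}_\lambda$ and $y_\lambda$ differ by a function whose normal flux I will compute. The boundary condition $\partial_\tau^N(\tau x\cdot\omega)|_{\tau=0}=0$ for $N\geq2$ gives $U^{(N)}_\lambda=0$ on $\partial\Omega$, matching the homogeneous boundary data in \eqref{eq6}.

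Having the equation, I would then compute $\partial_\tau^N\partial_{\nu_a}u|_{\tau=0}$ directly from the definition of the conormal derivative by differentiating $\sum_{i,j}a_{i,j}(u,\nabla u)\partial_{x_j}u\,\nu_i$. Again isolating the top-order term $\sum_{i,j}a_{i,j}(\lambda,0)\partial_{x_j}U^{(N)}_\lambda\nu_i$ and the term where the coefficient is differentiated the maximal number of times, the latter evaluates on the boundary using $\partial_\tau u|_{\tau=0}=x\cdot\omega$ (from Lemma \ref{l1}) so that the purely first-order-in-$u$ contribution to $\partial_\tau^N a$ yields the leading coefficient $N\,D_\eta^{N-1}a(\lambda,0)(\omega,\dots,\omega)\,\omega$; dotting with $\nu$ gives the announced main term $N D_\eta^{N-1}a(\lambda,0)(\omega,\dots,\omega)\omega\cdot\nu(x)$. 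Substituting the relation between $U^{(N)}_\lambda$ and $y_\lambda$ converts $\sum_{i,j}a_{i,j}(\lambda,0)\partial_{x_j}U^{(N)}_\lambda\nu_i$ into $\sum_{i,j}a_{i,j}(\lambda,0)\partial_{x_j}y_\lambda\nu_i$ plus a boundary contribution that I absorb, together with all the mixed lower-order terms, into the remainder $H_\lambda$; tracking the chain rule shows $H_\lambda$ and $y_\lambda$ depend only on $\omega,\lambda,\Omega$ and on $\partial_\mu^j D_\eta^k a(\lambda,0)$ with $k\leq N-2$, $j+k\leq N-1$, as claimed.

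The main obstacle I anticipate is the bookkeeping in the Fa\`a di Bruno expansion: correctly identifying, among the many multi-index terms produced by $\partial_\tau^N$ of the composite $a(u,\nabla u)$, exactly which ones carry $U^{(N)}_\lambda$ or $D_\eta^{N-1}a$ and verifying that every surviving lower-order term is killed down to the asserted dependence $k\leq N-2$. The key simplification that makes this tractable is that $\nabla u|_{\tau=0}=0$, which forces any monomial in the expansion that would raise the $\eta$-derivative order without a matching $\tau$-derivative of $\nabla u$ to vanish; this is precisely the mechanism that both lowers the order and pins the base point at $(\lambda,0)$. I would carry this out by induction on $N$, using the $N=1$ base case already established in Lemma \ref{l1}, so that the lower-order expressions entering $H_\lambda$ are controlled by the inductive hypothesis.
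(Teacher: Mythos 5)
Your overall route is the same as the paper's: differentiate the equation and the flux $N$ times at $\tau=0$, exploit $u_{\lambda,\omega,\tau}|_{\tau=0}=\lambda$, $\nabla u_{\lambda,\omega,\tau}|_{\tau=0}\equiv 0$ and $\partial_\tau u_{\lambda,\omega,\tau}|_{\tau=0}=x\cdot\omega$, and identify $\partial_\tau^N u_{\lambda,\omega,\tau}|_{\tau=0}$ with $y_\lambda$ via the uniqueness of solutions of \eqref{eq6} (a minor slip here: $U^{(N)}_\lambda$ and $y_\lambda$ are \emph{equal}, so the ``boundary contribution'' you plan to compute from their difference is zero). But there is one genuine gap, and it sits exactly where the lemma has content beyond Fa\`a di Bruno bookkeeping: the claim that $y_\lambda$ depends only on $\partial_\mu^j D_\eta^k a(\lambda,0)$ with $k\leq N-2$, $k+j\leq N-1$, i.e.\ is \emph{independent} of $D_\eta^{N-1}a(\lambda,0)$. ``Tracking the chain rule,'' as you propose, actually suggests the opposite: the source in \eqref{eq6} is $\nabla\cdot K_\lambda$, and $K_\lambda$ \emph{does} contain $D_\eta^{N-1}a(\lambda,0)$ --- your own boundary computation extracts precisely this term, since $K_\lambda=N\,D_\eta^{N-1}a(\lambda,0)(\omega,\ldots,\omega)\,\omega+K_\lambda'$ with $K_\lambda'$ free of $D_\eta^{N-1}a(\lambda,0)$. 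For the same reason, your assertion that ``all remaining contributions'' in the interior expansion involve only $k\leq N-2$ is false as stated: the expansion of $\partial_\tau^{N-1}\bigl[a(u,\nabla u)\bigr]\big|_{\tau=0}$ carries $D_\eta^{N-1}a(\lambda,0)(\omega,\ldots,\omega)$, i.e.\ $k=N-1$.

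The missing observation, which is the heart of the paper's proof, is that this top-order part of $K_\lambda$ is \emph{constant in $x$}: the coefficient derivatives are frozen at the $x$-independent point $(\lambda,0)$, and $\nabla\partial_\tau u_{\lambda,\omega,\tau}|_{\tau=0}\equiv\omega$ is a constant vector. Hence $\nabla\cdot K_\lambda=\nabla\cdot K_\lambda'$, so the right-hand side of \eqref{eq6} --- and therefore $y_\lambda$ itself --- is independent of $D_\eta^{N-1}a(\lambda,0)$, while the constant leading part survives only through the conormal flux on $\partial\Omega$, giving the explicit main term of \eqref{l2b}. Without this step the dependence statement of the lemma fails, and so does its use downstream: in the proof of Theorem \ref{t1} the induction hypothesis \eqref{t1f} controls $D_\eta^k a^j(\lambda,0)$ only for $k\leq m$, and one can conclude $\mathcal M_\lambda^1=\mathcal M_\lambda^2$ and isolate $[D_\eta^{m+1}a^1(\lambda,0)-D_\eta^{m+1}a^2(\lambda,0)](\omega,\ldots,\omega)\,\omega\cdot\nu(x_i)$ precisely because $H_\lambda$ and $y_\lambda$ have been shown not to involve the top-order $\eta$-derivative. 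The rest of your proposal (regularity via Proposition \ref{p1}, induction on $N$ with Lemma \ref{l1} as base case, extraction of the boundary leading term from $\partial_\tau u|_{\tau=0}=x\cdot\omega$) matches the paper and is fine.
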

\begin{proof} Let us first observe that the uniqueness of solutions of problem \eqref{eq6} implies that $y_\lambda=\partial_\tau^Nu_{\lambda,\omega,\tau}|_{\tau=0}$ with a differentiation in $\tau$ considered in the sense of functions taking values in $C^{2+\alpha}(\overline{\Omega})$. Combining this with the  fact that $\nabla u_{\lambda,\omega,\tau}|_{\tau=0}\equiv 0$ and $\partial_\tau u_{\lambda,\omega,\tau}|_{\tau=0}=x\cdot\omega$, one can  check by iteration that \eqref{l2b}  holds true with $H_\lambda$  depending  only on $\omega$, $\lambda$, $\Omega$ and $\partial_\mu^j D^k_{\eta}a(\lambda,0)$,  $k=0,\ldots,N-2$ and $k+j\leq N-1$. In the same way, one can check that $y_\lambda$ depends only on $\omega$, $\lambda$, $\Omega$ and $\partial_\mu^j D^k_{\eta}a(\lambda,0)$,   $k+j\leq N-1$.
Therefore, we only need to show that $y_\lambda$ is independent of $D_\eta^{N-1}a(\lambda,0)$. For this purpose, notice that the expression $K_\lambda$ appearing in \eqref{eq6} takes the form
$$K_\lambda=N D_\eta^{N-1}a(\lambda,0)(\omega,\ldots,\omega)\omega+K_\lambda'$$
with $K_\lambda'$ independent of $D_\eta^{N-1}a(\lambda,0)$. Therefore, we have
$$\nabla\cdot K_\lambda=\nabla\cdot K_\lambda'$$
which proves that $\nabla\cdot K_\lambda$ is independent of  $D_\eta^{N-1}a(\lambda,0)$ and the same is true for $y_\lambda$.
\end{proof}

Using these properties we are now in position to complete the proof of our main results.

\section{Proof of the uniqueness results}
\subsection{Proof of Theorem \ref{t1}}

We will show this result by iteration. One of the key ingredient of our proof will be the asymptotic properties of the data $\partial_{\nu_{a^j}}u_{\lambda,\omega,\tau}^j$, $j=1,2$, exhibited in Lemma \ref{l1} and \ref{l2}.

We start by proving that \eqref{t1b}, for some $\epsilon\in(0,\epsilon_\lambda)$, implies \eqref{t1c}, with $k=0$. For this purpose, we fix $\lambda\in[-R,R]$ and applying Lemma \ref{l1}, for all $j=1,\ldots,n$ and all $\omega\in\mathbb S^{n-1}$,  we deduce that
\bel{t1d}a^1(\lambda,0) \omega\cdot\nu(x_j)=\partial_\tau(\partial_{\nu_{a^1}}u_{\lambda,\omega,\tau}^1(x_j))|_{\tau=0}=\partial_\tau(\partial_{\nu_{a^2}}u_{\lambda,\omega,\tau}^2(x_j))|_{\tau=0}=a^2(\lambda,0)\omega\cdot\nu(x_j).\ee
Let us consider $\{e_1,\ldots,e_n\}$ an orthonormal basis of $\R^n$. Fixing $k\in\{1,\ldots,n\}$ and choosing $\omega=e_k$, we deduce from \eqref{t1d} that
$$a^1(\lambda,0)e_k\cdot\nu(x_j)=a^2(\lambda,0)e_k\cdot\nu(x_j),\quad  j,k=1,\ldots,n.$$
We fix
\bel{M}M=\max_{j=1,\ldots,n}|\nu(x_j)-e_j'|\ee
and, in view of \eqref{as}, we have $M<\frac{1}{\sqrt{n}}$. Then, we find
$$\begin{aligned}\abs{(a^1(\lambda,0)e_k-a^2(\lambda,0)e_k)\cdot e_j'}&\leq \abs{(a^1(\lambda,0)e_k-a^2(\lambda,0)e_k)}|\nu(x_j)-e_j'|\\
&\leq M\abs{(a^1(\lambda,0)e_k-a^2(\lambda,0)e_k)},\quad j,k=1,\ldots,n\end{aligned}$$
and, recalling that $\{e_1',\ldots,e_n'\}$ is an orthonormal basis of $\R^n$, we get
$$\begin{aligned}\abs{(a^1(\lambda,0)e_k-a^2(\lambda,0)e_k)}^2&=\sum_{j=1}^n\abs{(a^1(\lambda,0)e_k-a^2(\lambda,0)e_k)\cdot e_j'}^2\\
&\leq nM^2\abs{(a^1(\lambda,0)e_k-a^2(\lambda,0)e_k)}^2,\quad k=1,\ldots,n.\end{aligned}$$
Then, we have 
$$(1-nM^2)\abs{(a^1(\lambda,0)e_k-a^2(\lambda,0)e_k)}^2\leq 0,\quad k=1,\ldots,n$$
and, using the fact that $1-nM^2>0$, we get
$$(a^1(\lambda,0)-a^2(\lambda,0))e_k=0,\quad k=1,\ldots,n$$
which clearly implies \eqref{t1c}, with $k=0$.

Now let us fix $m\in\{0,\ldots,\ell-2\}$ and let us assume that
\bel{t1f}D_\eta^{j} a^1(\lambda,0)=D_\eta^{j} a^2(\lambda,0),\quad \lambda\in[-R,R],\ j=0,\ldots,m\ee
holds true and under this assumption let us show that \eqref{t1b} implies
\bel{t1g}D_\eta^{m+1} a^1(\lambda,0)=D_\eta^{m+1} a^2(\lambda,0),\quad \lambda\in[-R,R].\ee
In light of condition \eqref{ell1}-\eqref{ell2} and \eqref{a1}, we can fix $\gamma_k^j\in C^{2+\ell}(\R\times\R^n;(0,+\infty))$, $k=1,\ldots,p_j$, $j=1,2$, such that 
$$0<\gamma_1^j(\mu,\eta)<\ldots<\gamma_{p_j}^j(\mu,\eta),\quad \sigma(a^j(\mu,\eta))=\{\gamma_1^j(\mu,\eta),\ldots,\gamma_{p_j}^j(\mu,\eta)\},\quad (\mu,\eta)\in\R\times\R^n,$$
where $\sigma(a^j(\mu,\eta))$ denotes the spectrum of the matrix $a^j(\mu,\eta)$. Since $a^j$ fulfills condition  \eqref{a1}, we can find $m_1^j,\ldots,m_{p_j}^j\in\mathbb N$ and $f_{i,j}^j\in C^{2+\ell}(\R;\R^n)$, $i=1,\ldots,p_j$, $j=1,\ldots,m_i^j$ such that, for all $i=1,\ldots,p_j$ and all $(\mu,\eta)\in\R\times\R^n$, we have that
$$ \{f_{i,1}^j(\mu),\ldots,f_{i,m_i^j}^j(\mu)\}\textrm{ is an orthonormal basis of }Ker(a^j(\mu,\eta)-\gamma_i^j(\mu,\eta)\textrm{Id}_{\R^{n\times n}}).$$
Using the fact that \eqref{t1c} holds true for $k=0$, we deduce that $p_1=p_2:=p$, $m_{p_1}^1=m_{p_2}^2:=m_p$ and
$$ \gamma_k^1(\lambda,0)=\gamma_k^2(\lambda,0),\quad \lambda\in[-R,R],\ k=1,\ldots,p.$$
Moreover, for all $\lambda\in[-R,R]$ and all $\eta\in\R^n$, we have
$$\begin{aligned}Span\{f_{k,1}^1(\lambda),\ldots,f_{k,m_k}^1(\lambda)\}&=Ker(a^1(\lambda,0)-\gamma_k^1(\lambda,0)\textrm{Id}_{\R^{n\times n}})\\
&=Ker(a^2(\lambda,0)-\gamma_k^2(\lambda,0)\textrm{Id}_{\R^{n\times n}})\\
&=Span\{f_{k,1}^2(\lambda),\ldots,f_{k,m_k}^2(\lambda)\}\\
&=Ker(a^2(\lambda,\eta)-\gamma_k^2(\lambda,\eta)\textrm{Id}_{\R^{n\times n}}),\quad k=1,\ldots,p.\end{aligned}$$
Therefore, for all $k=1,\ldots,p$, $\lambda\in[-R,R]$ and all $\eta\in\R^n$, $\{f_{k,1}^1(\lambda),\ldots,f_{k,m_k}^1(\lambda)\}$ is an orthonormal basis of the  subspace $$Ker(a^1(\lambda,\eta)-\gamma_k^1(\lambda,\eta)\textrm{Id}_{\R^{n\times n}})=Ker(a^2(\lambda,\eta)-\gamma_k^2(\lambda,\eta)\textrm{Id}_{\R^{n\times n}})$$ of $\R^n$. In addition, applying  \eqref{a1}, we deduce that, for all $\lambda\in[-R,R]$, 
$$\{f_{i,j}^1(\lambda):\ i=1,\ldots,p,\ j=1,\ldots,m_i\}$$
is an orthonormal basis of $\R^n$. Thus, for all $\lambda\in[-R,R]$, $\xi_1,\ldots,\xi_{m+1}\in\R^n$,  we have
\bel{t1h}\begin{aligned}& D_\eta^{m+1}a^1(\lambda,0)(\xi_1,\ldots,\xi_{m+1})\xi_{m+2}-D_\eta^{m+1}a^1(\lambda,0)(\xi_1,\ldots,\xi_{m+1})\xi_{m+2}\\
&=\sum_{i=1}^p\sum_{j=1}^{m_i}D_\eta^{m+1}(\gamma_i^1-\gamma_i^2)(\lambda,0)(\xi_1,\ldots,\xi_{m+1})(\xi_{m+2}\cdot f_{i,j}^1(\lambda))f_{i,j}^1(\lambda).\end{aligned}\ee
In particular, the condition
\bel{t1i}D_\eta^{m+1}\gamma_i^1(\lambda,0)=D_\eta^{m+1}\gamma_i^2(\lambda,0),\quad \lambda\in[-R,R],\ i=1,\ldots,p,\ee
implies \eqref{t1g}. So we are left with the proof of \eqref{t1i}.

We fix $\omega\in\mathbb S^{n-1}$, $\lambda\in[-R,R]$ and we consider the solution $u^j_{\lambda,\omega,\tau}$ of \eqref{eq1} with $a=a^j$. In view of Lemma \ref{l2}, 
$$\partial_\tau^{m+2}(\partial_{\nu_a}u_{\lambda,\omega,\tau}^j(x_i))|_{\tau=0}=(m+2) D_\eta^{m+1}a(\lambda,0)(\omega,\ldots,\omega)\omega\cdot\nu(x_i)+\mathcal M_\lambda^j(x_i),\quad j=1,2,\ i=1,\ldots,n,$$
where $\mathcal M_\lambda^j$, $j=1,2$, depends  only on $\omega$, $\lambda$, $\Omega$ and $\partial_\mu^j D^k_{\eta}a(\lambda,0)$,  $k=0,\ldots,m$ and $k+j\leq m+1$.
In light of \eqref{t1f}, we obtain
$$\mathcal M_\lambda^1(x_i)=\mathcal M_\lambda^2(x_i),\quad i=1,\ldots,n,$$
which implies
\bel{t1k}[D_\eta^{m+1}a^1(\lambda,0)-D_\eta^{m+1}a^2(\lambda,0)](\omega,\ldots,\omega)\omega\cdot\nu(x_i)=0,\quad i=1,\ldots,n,\ \omega\in\mathbb S^{n-1}.\ee
Then, fixing $M$ given by \eqref{M}, we get
$$\begin{aligned}&\abs{([D_\eta^{m+1}a^1(\lambda,0)-D_\eta^{m+1}a^2(\lambda,0)](\omega,\ldots,\omega)\omega\cdot e_i'}\\
&\leq \abs{[D_\eta^{m+1}a^1(\lambda,0)-D_\eta^{m+1}a^2(\lambda,0)])](\omega,\ldots,\omega)\omega}|\nu(x_i)-e_i'|\\
&\leq M\abs{[D_\eta^{m+1}a^1(\lambda,0)-D_\eta^{m+1}a^2(\lambda,0)])](\omega,\ldots,\omega)\omega},\quad i=1,\ldots,n,\ \omega\in\mathbb S^{n-1}\end{aligned}$$
and it follows that
$$\begin{aligned}&\abs{[D_\eta^{m+1}a^1(\lambda,0)-D_\eta^{m+1}a^2(\lambda,0)])](\omega,\ldots,\omega)\omega}^2\\
&=\sum_{i=1}^n\abs{([D_\eta^{m+1}a^1(\lambda,0)-D_\eta^{m+1}a^2(\lambda,0)])](\omega,\ldots,\omega)\omega)\cdot e_i'}^2\\
&\leq nM^2\abs{[D_\eta^{m+1}a^1(\lambda,0)-D_\eta^{m+1}a^2(\lambda,0)])](\omega,\ldots,\omega)\omega}^2.\end{aligned}$$
Then, we have 
$$(1-nM^2)\abs{[D_\eta^{m+1}a^1(\lambda,0)-D_\eta^{m+1}a^2(\lambda,0)])](\omega,\ldots,\omega)\omega}^2\leq 0$$
and, using the fact that $1-nM^2>0$, we get
$$[D_\eta^{m+1}a^1(\lambda,0)-D_\eta^{m+1}a^2(\lambda,0)])](\omega,\ldots,\omega)\omega=0.$$
Fixing $i\in\{1,\ldots,p\}$, we obtain
$$[D_\eta^{m+1}a^1(\lambda,0)-D_\eta^{k+1}a^2(\lambda,0)](\omega,\ldots,\omega)\omega\cdot f_{i,1}^1(\lambda)=0.$$
Combining this with \eqref{t1h}, we obtain
$$D_\eta^{m+1}(\gamma_i^1-\gamma_i^2)(\lambda,0)(\omega,\ldots,\omega)(\omega\cdot f_{i,1}^1(\lambda))=0.$$
It follows that
\bel{t1l}D_\eta^{m+1}(\gamma_i^1-\gamma_i^2)(\lambda,0)(\omega,\ldots,\omega)=0\ee
holds true for  any $\omega\in\mathbb S^{n-1}$ satisfying $\omega\cdot f_{i,1}^1(\lambda)\neq0$.
Now let us fix $\omega'\in\mathbb S^{n-1}$ satisfying $\omega'\cdot f_{i,1}^1(\lambda)=0$. For any $q\in\N$, we have
$$D_\eta^{m+1}(\gamma_i^1-\gamma_i^2)(\lambda,0)((\sqrt{1-2^{-2q}})\omega'+2^{-q}f_{i,1}^1(\lambda),\ldots,(\sqrt{1-2^{-2q}})\omega'+2^{-q}f_{i,1}^1(\lambda))=0.$$
Sending $q\to+\infty$ and using the continuity of the map $\R^n\ni\xi\mapsto D_\eta^{m+1}(\gamma_i^1-\gamma_i^2)(\lambda,0)(\xi,\ldots,\xi)$, we obtain
$$D_\eta^{m+1}(\gamma_i^1-\gamma_i^2)(\lambda,0)(\omega',\ldots,\omega')=0$$
which proves that \eqref{t1l} holds true for any $\lambda\in[-R,R]$ and any $\omega\in\mathbb S^{n-1}$. Then, recalling that the map
$$(\xi_1,\ldots,\xi_{m+1})\mapsto D_\eta^{m+1}(\gamma_i^1-\gamma_i^2)(\lambda,0)(\xi_1,\ldots,\xi_{m+1})$$
is a symmetric $m+1$-linear map, by polarization (see e.g. \cite[Theorem 1]{T}), we deduce that
$$D_\eta^{m+1}\gamma_i^1(\lambda,0)=D_\eta^{m+1}\gamma_i^2(\lambda,0),\quad \lambda\in[-R,R].$$
Since $i\in\{1,\ldots,p\}$ is arbitrary chosen, we obtain \eqref{t1i} which implies \eqref{t1g}. This completes the proof of the theorem.
\subsection{Proof of Corollary \ref{c2}}
The proof of Corollary \ref{c2} follow the line of Theorem \ref{t1} with some modifications that will be mentioned here. Indeed, following the argumentation of Theorem \ref{t1} we can prove that \eqref{c2a}, for arbitrary chosen $\epsilon\in(0,\epsilon_\lambda)$ and $x_0\in\partial\Omega$, implies that
$$(\gamma^1(\lambda,0)-\gamma^2(\lambda,0))\nu(x_0)\cdot \omega=0,\quad \omega\in\mathbb S^{n-1}.$$
Choosing $\omega=\nu(x_0)$, we deduce that \eqref{t1c} holds true for $k=0$.

Now let us fix $m\in\{0,\ldots,\ell-2\}$ and let us assume that \eqref{t1f} is fulfilled. Fixing $\omega\in\mathbb S^{n-1}$ and repeating the argumentation of Theorem \ref{t1}, we deduce that
$$D_\eta^{m+1}(\gamma^1-\gamma^2)(\lambda,0)(\omega,\ldots,\omega)(\omega\cdot \nu(x_0))=0.$$
It follows that
$$D_\eta^{m+1}(\gamma^1-\gamma^2)(\lambda,0)(\omega,\ldots,\omega)=0$$
holds true for any $\omega\in\mathbb S^{n-1}$ satisfying $\omega\cdot \nu(x_0)\neq0$.
Now let us fix $\omega'\in\mathbb S^{n-1}$ satisfying $\omega'\cdot \nu(x_0)=0$. For any $q\in\N$, we have
$$D_\eta^{m+1}(\gamma_i^1-\gamma_i^2)(\lambda,0)((\sqrt{1-2^{-2q}})\omega'+2^{-q}\nu(x_0),\ldots,(\sqrt{1-2^{-2q}})\omega'+2^{-q}\nu(x_0))=0.$$
Sending $q\to+\infty$ and using the continuity of the map $\xi\mapsto D_\eta^{k+1}(\gamma^1-\gamma^2)(\lambda,0)(\xi,\ldots,\xi)$, we obtain
$$D_\eta^{m+1}(\gamma_i^1-\gamma_i^2)(\lambda,0)(\omega',\ldots,\omega')=0$$
which proves that \eqref{t1l} holds true for any $\omega\in\mathbb S^{n-1}$. Applying again the polarization argument we deduce that \eqref{t1g} holds true and we can conclude by iteration.

\section{Proof of the stability estimates}
\subsection{Proof of Theorem \ref{t2}}

We will prove the stability estimate \eqref{t2bb} as well as estimate \eqref{t2b}.  We start by considering \eqref{t2bb}. For this purpose, we fix $\lambda\in[-R,R]$, $\{e_1,\ldots,e_n\}$ an orthonormal basis of $\R^n$ and applying Lemma \ref{l1} and \ref{l2} we deduce that, for all $k=1,\ldots,n$, we have
$$\partial_\tau \partial_{\nu_{a^j}}u_{\lambda,e_k,\tau}^j(x)|_{\tau=0}=a^j(\lambda,0) e_k\cdot\nu(x)=b^j(\lambda) e_k\cdot\nu(x), j=1,2,\ x\in\partial\Omega.$$
Now fixing $M$ given by \eqref{M} and $k,j\in\{1,\ldots,n\}$, we get
$$ \begin{aligned}&\abs{((b^1(\lambda)-b^2(\lambda))e_k)\cdot e_j'}\\
&\leq \abs{\partial_\tau (\partial_{\nu_{a^1}}u_{\lambda,e_k,\tau}^1(x_j)-\partial_\tau \partial_{\nu_{a^2}}u_{\lambda,e_k,\tau}^2(x_j))|_{\tau=0}}+\abs{((b^1(\lambda)-b^2(\lambda))e_k)\cdot (e_j'-\nu(x_j)}\\
&\leq \sup_{\mu\in[-R,R]}\sup_{\theta\in\mathbb S^{n-1}}\abs{\partial_\tau (\partial_{\nu_{a^1}}u_{\mu,\theta,\tau}^1(x_j)-\partial_\tau \partial_{\nu_{a^2}}u_{\mu,\theta,\tau}^2(x_j))|_{\tau=0}}+M\abs{((b^1(\lambda)-b^2(\lambda))e_k)}.\end{aligned}$$
In view of \eqref{as}, we have $M<\frac{1}{\sqrt{n}}$ and we find
$$ \begin{aligned}&\abs{((b^1(\lambda)-b^2(\lambda))e_k)}^2\\
&=\sum_{j=1}^n\abs{((b^1(\lambda)-b^2(\lambda))e_k)\cdot e_j'}^2\\
&\leq n\left( \max_{j=1,\ldots,n}\sup_{\mu\in[-R,R]}\sup_{\theta\in\mathbb S^{n-1}}\abs{\partial_\tau (\partial_{\nu_{a^1}}u_{\mu,\theta,\tau}^1(x_j)-\partial_\tau \partial_{\nu_{a^2}}u_{\mu,\theta,\tau}^2(x_j))|_{\tau=0}}+M\abs{((b^1(\lambda)-b^2(\lambda))e_k)}\right)^2\\
&\leq (n+\frac{2n}{n^{-1}-M^2})\left(\max_{j=1,\ldots,n}\sup_{\mu\in[-R,R]}\sup_{\theta\in\mathbb S^{n-1}}\abs{\partial_\tau (\partial_{\nu_{a^1}}u_{\mu,\theta,\tau}^1(x_j)- \partial_{\nu_{a^2}}u_{\mu,\theta,\tau}^2(x_j))|_{\tau=0}}\right)^2\\
&\ \ \ +\frac{1+nM^2}{2}\abs{((b^1(\lambda)-b^2(\lambda))e_k)}^2.\end{aligned}$$
It follows
$$\begin{aligned}&\abs{((b^1(\lambda)-b^2(\lambda))e_k)}^2\\
&\leq\frac{2(n+\frac{2n}{n^{-1}-M^2})}{1-nM^2}\left(\max_{j=1,\ldots,n}\sup_{\mu\in[-R,R]}\sup_{\theta\in\mathbb S^{n-1}}\abs{\partial_\tau (\partial_{\nu_{a^1}}u_{\mu,\theta,\tau}^1(x_j)- \partial_{\nu_{a^2}}u_{\mu,\theta,\tau}^2(x_j))|_{\tau=0}}\right)^2\end{aligned}$$
which implies 
$$\begin{aligned}&\abs{b^1(\lambda)-b^2(\lambda)}_{\R^{n\times n}}\\
&\leq  \sqrt{\frac{2(n+\frac{2n}{n^{-1}-M^2})}{1-nM^2}}\max_{j=1,\ldots,n}\sup_{\mu\in[-R,R]}\sup_{\theta\in\mathbb S^{n-1}}\abs{\partial_\tau (\partial_{\nu_{a^1}}u_{\mu,\theta,\tau}^1(x_j)- \partial_{\nu_{a^2}}u_{\mu,\theta,\tau}^2(x_j))|_{\tau=0}}.\end{aligned}$$
From this estimate, we deduce \eqref{t2bb}.

Now let us show \eqref{t2b}. We start by considering \eqref{t2b} for $N=1$.  In view of Lemma \ref{l1} and \ref{l2}, for any $\lambda\in[-R,R]$ and $\omega\in\mathbb S^{n-1}$, we have
$$\begin{aligned}&\partial_\tau^2 \partial_{\nu_{a^j}}u_{\lambda,\omega,\tau}^j(x)|_{\tau=0}\\
&=(x\cdot\omega)\partial_\mu b^j(\lambda)\omega\cdot\nu(x)+D_\eta\gamma_j(\lambda,0)(\omega) b^j(\lambda)\omega\cdot\nu(x)+b^j(\lambda)\nabla y^j_\lambda\cdot\nu(x), j=1,2,\ x\in\partial\Omega,\end{aligned}$$
where $y^j_\lambda$ solves the problem
$$
\left\{
\begin{array}{ll}
-\sum_{k,\ell=1}^n 
 b_{k,\ell}^j(\lambda) \partial_{x_k} \partial_{x_\ell} y^j_\lambda =\partial_\mu b^j(\lambda)\omega\cdot\omega  & \mbox{in}\ \Omega ,
\\
y^j_\lambda=0 &\mbox{on}\ \partial\Omega.
\end{array}
\right.$$

Fixing $b=b^1-b^2$, $\gamma=\gamma_1-\gamma_2$ and $h_\tau=\partial_{\nu_{a^1}}u_{\lambda,\omega,\tau}^1-\partial_{\nu_{a^2}}u_{\lambda,\omega,\tau}^2$,  for any $\lambda\in[-R,R]$, $\omega\in\mathbb S^{n-1}$ and $k\in\{1,\ldots,n\}$, we obtain
\bel{t2d}\begin{aligned}& D_\eta\gamma(\lambda,0)(\omega) b^1(\lambda)\omega\cdot\nu(x_k)\\
&=\partial_\tau^2 h_\tau(x_k)|_{\tau=0}-(x_k\cdot\omega)\partial_\mu b(\lambda)\omega\cdot\nu(x_k)-D_\eta\gamma^2(\lambda,0)(\omega)b(\lambda)\omega\cdot\nu(x_k)\\
&\ \ -b(\lambda)\nabla y^1_\lambda\cdot\nu(x_k)-b^2(\lambda)\nabla y_\lambda\cdot\nu(x_k)\\
&=\partial_\tau^2 h_\tau(x_k)|_{\tau=0}+I+II+III+IV,\end{aligned}\ee
where $y_\lambda=y^1_\lambda-y^2_\lambda$ solves the problem 
$$
\left\{
\begin{array}{ll}
-\sum_{k,\ell=1}^n 
 b_{k,\ell}^1(\lambda) \partial_{x_k} \partial_{x_\ell} y_\lambda =\nabla\cdot (b(\lambda)\nabla y^2_\lambda)+\partial_\mu b(\lambda)\omega\cdot\omega  & \mbox{in}\ \Omega ,
\\
y_\lambda=0 &\mbox{on}\ \partial\Omega.
\end{array}
\right.$$      
Using the formula \eqref{t2d}, we will start by proving that
\bel{t2e}\begin{aligned}&| D_\eta\gamma(\lambda,0)(\omega) b^1(\lambda)\omega\cdot\nu(x_k)| \\
&\leq C\sum_{k=1}^{2}\left(\max_{j=1,\ldots,n}\sup_{\theta\in \mathbb S^{n-1}}\sup_{\mu\in[-R,R]}\abs{\partial_\tau^k(\partial_{\nu_{a^1}} u_{\mu,\theta,\tau}^1(x_j)-\partial_{\nu_{a^2}} u_{\mu,\theta,\tau}^2(x_j))|_{\tau=0}}\right)^{\frac{3^k}{3^{2}}}.\end{aligned}\ee
For this purpose, we will estimate the different expression $I$, $II$, $III$, $IV$ appearing in \eqref{t2d}.

For $I$, we have
$$\abs{(x_k\cdot\omega)\partial_\mu b(\lambda)\omega\cdot\nu(x_k)}\leq C\norm{\partial_\mu b}_{L^\infty((-R,R);\R^{n\times n})},\quad \lambda\in[-R,R],$$
with $C>0$ depending only on $\Omega$. By interpolation (see e.g. \cite[Lemma AppendixB.1.]{CK}), we obtain
$$\norm{\partial_\mu b}_{L^\infty((-R,R);\R^{n\times n})}\leq C\norm{ b}_{W^{3,\infty}((-R,R);\R^{n\times n}))}^{\frac{1}{3}}\norm{\partial_\mu b}_{L^2((-R,R);\R^{n\times n})}^{\frac{2}{3}},$$
$$\norm{\partial_\mu b}_{L^2((-R,R);\R^{n\times n})}\leq C\norm{ b}_{L^2((-R,R);\R^{n\times n})}^{\frac{1}{2}}\norm{ b}_{H^2((-R,R);\R^{n\times n})}^{\frac{1}{2}},$$
with $C>0$ depending only on $R$. Combining this with \eqref{t2a} and \eqref{t2bb}, we find
\bel{t2f}\norm{\partial_\mu b}_{L^\infty((-R,R);\R^{n\times n})}\leq C\left(\max_{j=1,\ldots,n}\sup_{\theta\in \mathbb S^{n-1}}\sup_{\mu\in[-R,R]}\abs{\partial_\tau(\partial_{\nu_{a^1}} u_{\mu,\theta,\tau}^1(x_j)-\partial_{\nu_{a^2}} u_{\mu,\theta,\tau}^2(x_j))|_{\tau=0}}\right)^{\frac{1}{3}},\ee
with $C>0$ depending on $R$, $\psi$, $x_1,\ldots,x_n$ and $\Omega$.
Therefore, we have
\bel{t2g}|I|\leq C\left(\max_{j=1,\ldots,n}\sup_{\theta\in \mathbb S^{n-1}}\sup_{\mu\in[-R,R]}\abs{\partial_\tau(\partial_{\nu_{a^1}} u_{\mu,\theta,\tau}^1(x_j)-\partial_{\nu_{a^2}} u_{\mu,\theta,\tau}^2(x_j))|_{\tau=0}}\right)^{\frac{1}{3}}.\ee

In the same way, combining \eqref{t2a} and \eqref{t2bb}, we deduce that 
\bel{t2h}|II|\leq C\left(\max_{j=1,\ldots,n}\sup_{\theta\in \mathbb S^{n-1}}\sup_{\mu\in[-R,R]}\abs{\partial_\tau(\partial_{\nu_{a^1}} u_{\mu,\theta,\tau}^1(x_j)-\partial_{\nu_{a^2}} u_{\mu,\theta,\tau}^2(x_j))|_{\tau=0}}\right)^{\frac{1}{3}},\ee
with $C>0$ depending on $R$, $\psi$, $x_1,\ldots,x_n$ and $\Omega$.

For $III$, note first that the function $\kappa$ in \eqref{ell2} satisfies the condition
$$c=\inf_{\mu\in[-R,R]}\kappa(\mu,0)>0$$
and we have
\bel{t2ab} a^j(\mu,0)\xi\cdot\xi\geq c|\xi|^2,\quad \mu\in[-R,R],\ j=1,2.\ee
In addition, condition \eqref{t2a} implies
\bel{t2ba} |a^j(\mu,0)|_{\R^{n\times n}}\leq \psi(R),\quad \mu\in[-R,R],\ j=1,2.\ee
Combining \eqref{t2ab}-\eqref{t2ba} with \cite[Theorem 6.6]{GT}, we deduce that there exists $C_3>0$ depending on $\kappa$, $\psi$, $\Omega$ and $R$ such that 
\bel{t2i}\norm{y^j_\lambda}_{C^{2+\alpha}(\overline{\Omega})}\leq C_3,\quad j=1,2,\ \lambda\in[-R,R].\ee
It follows that
$$\abs{III}\leq C\abs{b_1(\lambda)-b_2(\lambda)},\quad \lambda\in[-R,R]$$
and, combining this with \eqref{t2bb}, we get
\bel{t2j}|III|\leq C\left(\max_{j=1,\ldots,n}\sup_{\theta\in \mathbb S^{n-1}}\sup_{\mu\in[-R,R]}\abs{\partial_\tau(\partial_{\nu_{a^1}} u_{\mu,\theta,\tau}^1(x_j)-\partial_{\nu_{a^2}} u_{\mu,\theta,\tau}^2(x_j))|_{\tau=0}}\right)^{\frac{1}{3}},\ee

Finally, for $IV$, applying  \eqref{t2ab}-\eqref{t2ba}, \eqref{t2i} and \cite[Theorem 6.6]{GT}, we obtain
$$\norm{y_\lambda}_{C^{2+\alpha}(\overline{\Omega})}\leq C\left(\norm{ b}_{L^\infty((-R,R);\R^{n\times n})}+\norm{\partial_\mu b}_{L^\infty((-R,R);\R^{n\times n})}\right),\quad \lambda\in[-R,R],$$
with $C>0$  depending on $\kappa$, $\psi$, $\Omega$ and $R$. Then, \eqref{t2a} and \eqref{t2f} imply that, for all $\lambda\in[-R,R]$, we have
$$\norm{y_\lambda}_{C^{2+\alpha}(\overline{\Omega})}\leq C\left(\max_{j=1,\ldots,n}\sup_{\theta\in \mathbb S^{n-1}}\sup_{\mu\in[-R,R]}\abs{\partial_\tau(\partial_{\nu_{a^1}} u_{\mu,\theta,\tau}^1(x_j)-\partial_{\nu_{a^2}} u_{\mu,\theta,\tau}^2(x_j))|_{\tau=0}}\right)^{\frac{1}{3}}.$$
From this last estimate, we deduce that
$$|IV|\leq C\left(\max_{j=1,\ldots,n}\sup_{\theta\in \mathbb S^{n-1}}\sup_{\mu\in[-R,R]}\abs{\partial_\tau(\partial_{\nu_{a^1}} u_{\mu,\theta,\tau}^1(x_j)-\partial_{\nu_{a^2}} u_{\mu,\theta,\tau}^2(x_j))|_{\tau=0}}\right)^{\frac{1}{3}}.$$
Combining this estimate with \eqref{t2d}, \eqref{t2g}, \eqref{t2h}, \eqref{t2j}, we obtain \eqref{t2e}. 

We will now show that \eqref{t2e} implies \eqref{t2b} for $N=1$. For this purpose, we set
$$s:=\sum_{k=1}^{2}\left(\max_{j=1,\ldots,n}\sup_{\theta\in \mathbb S^{n-1}}\sup_{\mu\in[-R,R]}\abs{\partial_\tau^k(\partial_{\nu_{a^1}} u_{\mu,\theta,\tau}^1(x_j)-\partial_{\nu_{a^2}} u_{\mu,\theta,\tau}^2(x_j))|_{\tau=0}}\right)^{\frac{3^k}{3^{2}}},$$
$\lambda\in[-R,R]$ and $\omega\in\mathbb S^{n-1}$.
Fixing $M$ given by \eqref{M} and $k\in\{1,\ldots,n\}$, we get
$$ \begin{aligned}&\abs{ D_\eta\gamma(\lambda,0)(\omega) b^1(\lambda)\omega\cdot e_k'}\\
&\leq Cs+\abs{(D_\eta\gamma(\lambda,0)(\omega) b^1(\lambda)\omega)\cdot (e_k'-\nu(x_k)}\\
&\leq Cs+M\abs{D_\eta\gamma(\lambda,0)(\omega) b^1(\lambda)\omega}.\end{aligned}$$
In light of \eqref{as}, we have $M<\frac{1}{\sqrt{n}}$ and we find
$$ \begin{aligned}&\abs{D_\eta\gamma(\lambda,0)(\omega) b^1(\lambda)\omega}^2\\
&=\sum_{j=1}^n\abs{(D_\eta\gamma(\lambda,0)(\omega) b^1(\lambda)\omega)\cdot e_k'}^2\\
&\leq n\left( Cs+M\abs{D_\eta\gamma(\lambda,0)(\omega) b^1(\lambda)\omega}\right)^2\\
&\leq (n+\frac{2n}{n^{-1}-M^2})Cs^2 +\frac{1+nM^2}{2}\abs{D_\eta\gamma(\lambda,0)(\omega) b^1(\lambda)\omega}^2.\end{aligned}$$
It follows
$$\abs{D_\eta\gamma(\lambda,0)(\omega)}\abs{ b^1(\lambda)\omega}_{\R^{n}}\leq  C\sum_{k=1}^{2}\left(\max_{j=1,\ldots,n}\sup_{\theta\in \mathbb S^{n-1}}\sup_{\mu\in[-R,R]}\abs{\partial_\tau^k(\partial_{\nu_{a^1}} u_{\mu,\theta,\tau}^1(x_j)-\partial_{\nu_{a^2}} u_{\mu,\theta,\tau}^2(x_j))|_{\tau=0}}\right)^{\frac{3^k}{3^{2}}}.$$
On the other hand, in view of \eqref{ell1}-\eqref{ell2} and \eqref{t2aa}, one can check that there exists $c>0$ depending on $\kappa$ and $R$, such that
$$\abs{ b^1(\lambda)\omega}_{\R^{n}}\geq c,\quad \lambda\in[-R,R],\ \omega\in\mathbb S^{n-1}.$$
Therefore, we have
$$\begin{aligned}&\sup_{\mu\in[-R,R]}\sup_{\theta\in\mathbb S^{n-1}}\abs{D_\eta\gamma(\mu,0)(\theta)}\\
&\leq  C\sum_{k=1}^{2}\left(\max_{j=1,\ldots,n}\sup_{\theta\in \mathbb S^{n-1}}\sup_{\mu\in[-R,R]}\abs{\partial_\tau^k(\partial_{\nu_{a^1}} u_{\mu,\theta,\tau}^1(x_j)-\partial_{\nu_{a^2}} u_{\mu,\theta,\tau}^2(x_j))|_{\tau=0}}\right)^{\frac{3^k}{3^{2}}}\end{aligned}$$
which implies \eqref{t2b} for $N=1$.

In a similar way to Theorem \ref{t1}, we can show by iteration of the above argumentation that, for any $N=1,\ldots,\ell-1$, we have
$$ \begin{aligned}&\sup_{\mu\in[-R,R]}\sup_{\theta\in\mathbb S^{n-1}}\abs{D_\eta^N\gamma(\mu,0)(\theta,\ldots,\theta)}\\
&\leq C\sum_{k=1}^{N+1}\left(\max_{j=1,\ldots,n}\sup_{\theta\in \mathbb S^{n-1}}\sup_{\mu\in[-R,R]}\abs{\partial_\tau^k(\partial_{\nu_{a^1}} u_{\mu,\theta,\tau}^1(x_j)-\partial_{\nu_{a^2}} u_{\mu,\theta,\tau}^2(x_j))|_{\tau=0}}\right)^{\frac{3^k}{3^{N+1}}},\end{aligned}$$
from which we can derive \eqref{t1b} by polarization (see e.g. \cite[Theorem 1]{T}). This completes the proof of the theorem.

\subsection{Proof of Corollary \ref{c3}}

The proof of Corollary \ref{c3} follows the line of Theorem \ref{t2} with some modifications that will be mentioned here. Indeed, following the argumentation of Theorem \ref{t2} we can show that, for any $\omega\in\mathbb S^{n-1}$, we have
$$\begin{aligned}\abs{(\gamma^1(\lambda,0)-\gamma^2(\lambda,0))\nu(x_0)\cdot \omega}\leq C_1\left(\sup_{\theta\in\mathbb S^{n-1}}\abs{\partial_\tau (\partial_{\nu_{a^1}}u_{\lambda,\theta,\tau}^1(x_0)- \partial_{\nu_{a^2}}u_{\lambda,\theta,\tau}^2(x_0))|_{\tau=0}}\right)\end{aligned}$$
with $C_1$ depending only on $n$ and $x_0$. Choosing $\omega=\nu(x_0)$, we obtain \eqref{c3a}.
In the same way, repeating the iteration process described in the proof of Theorem \ref{t2}, we obtain 
$$ \begin{aligned}&\sup_{\mu\in[-R,R]}\sup_{\theta\in\mathbb S^{n-1}}\abs{D_\eta^N\gamma(\mu,0)(\theta,\ldots,\theta)}\\
&\leq C\sum_{k=1}^{N+1}\left(\sup_{\theta\in \mathbb S^{n-1}}\sup_{\mu\in[-R,R]}\abs{\partial_\tau^k(\partial_{\nu_{a^1}} u_{\mu,\theta,\tau}^1(x_0)-\partial_{\nu_{a^2}} u_{\mu,\theta,\tau}^2(x_0))|_{\tau=0}}\right)^{\frac{3^k}{3^{N+1}}},\end{aligned}$$
and we find \eqref{c3b} by polarization. This completes the proof of the corollary.

\bigskip
\vskip 1cm


\begin{thebibliography}{CS2}
 \frenchspacing
\bibitem{Al}  
{\sc O. M. Alifanov}, {\em Inverse Heat Transfer Problems}. Springer-Verlag, New York, 1994.
\bibitem{BBC}  
{\sc J. V. Beck, B. Blackwell, and C. R. St. Clair}, {\em Inverse Heat Conduction: Ill-Posed Problems},
Wiley Interscience, New York, 1985.
\bibitem{Ca1}  
{\sc J. R. Cannon}, {\em Determination of the unknown coefficient $k(u)$ in the equation $\nabla\cdot k(u)\nabla u=0$
from overspecified boundary data},  J. Math. Anal. Appl., \textbf{18} (1967), 112-114.
\bibitem{Ca2}  
{\sc J. R. Cannon}, {\em  The One-Dimensional Heat Equation}, Addison-Wesley, Cambridge, 1984.
\bibitem{CF} {\sc C. C\^{a}rstea,  A. Feizmohammadi}, {\em An inverse boundary value problem for certain anisotropic quasilinear elliptic equations}, Journal of Differential Equations, \textbf{284} (2021), 318-349.
\bibitem{CFKKU}  
{\sc C. C\^{a}rstea,  A. Feizmohammadi, Y. Kian, K. Krupchyk, G. Uhlmann}, {\em The Calder\'{o}n inverse problem for isotropic quasilinear conductivities}, Adv. in Math., \textbf{391} (2021),  107956.
\bibitem{CS} {\sc P. P. Castaneda, P. Suquet}, {\em Nonlinear composites}, Advances in 
Applied Mechanics, \textbf{34} (1997), 171-302.
\bibitem{CY}{\sc J. Cheng and M. Yamamoto}, {\em One new strategy for a \textit{priori} choice of regularizing
parameters in Tikhonov's regularization}, Inverse Problems, \textbf{16} (2000), L31-L38.
\bibitem{C}{\sc M. Choulli}, {\em Stable determination of the nonlinear term in a quasilinear elliptic equation by boundary measurements}, preprint,  	arXiv:2205.16000.
\bibitem{CK}{\sc M. Choulli, Y. Kian}, {\em Logarithmic stability in determining the time-dependent zero order coefficient in a parabolic equation from a partial Dirichlet-to-Neumann map. Application to the determination of a nonlinear term}, J. Math. Pures Appl., 	\textbf{114} (2018), 235-261.
\bibitem{EPS1}{\sc H. Egger, J-F. Pietschmann, M. Schlottbom}, {\em Numerical identification of a nonlinear diffusion law via regularization in Hilbert scales}, Inverse Problems, \textbf{30} (2014), 025004.
\bibitem{EPS2}{\sc H. Egger, J-F. Pietschmann, M. Schlottbom}, {\em Simultaneous identification of diffusion
and absorption coefficients in a quasilinear elliptic problem}, Inverse Problems, \textbf{30} (2014), 035009.
\bibitem{FO20}{\sc A. Feizmohammadi, L. Oksanen}, {\em An inverse problem for a semilinear elliptic equation in
Riemannian geometries}, Journal of Differential Equations, \textbf{269} (2020), 4683-4719.
\bibitem{GT}{\sc D. Gilbarg, N. Trudinger}, {\em Elliptic partial differential equations of second order}, Springer-Verlag, Berlin, revised third edition, 2001.
\bibitem{GR} \sc{R. Glowinski and J.  Rappaz}, {\em Approximation of a nonlinear elliptic
problem arising in a non-Newtonian fluid flow model in glaciology}, ESAIM:
Mathematical Modelling and Numerical Analysis, \textbf{37} (2003), 175-186.
\bibitem{H} {\sc L. H\"ormander},  {\em The boundary problems of physical geodesy},
Arch. Rational Mech. Anal. \textbf{62} (1976), no. 1, 1-52.
\bibitem{ImYa}{\sc O. Imanuvilov, M. Yamamoto}, {\em Unique determination of potentials and semilinear terms of
semilinear elliptic equations from partial Cauchy data}, Journal of Inverse and Ill-Posed Problems, \textbf{21}
(2013), 85-108.
\bibitem{Is1}
{\sc V. Isakov}, {\em On uniqueness in inverse problems for semilinear parabolic equations}, Arch.
Rat. Mech. Anal., \textbf{124} (1993), 1-12.

\bibitem{IN}
{\sc V. Isakov, A. Nachman}, {\em Global uniqueness for a two-dimensional semilinear elliptic inverse problem}, Trans. Amer. Math. Soc., \textbf{347} (1995), 3375-3390.
\bibitem{IS}{\sc V. Isakov, J. Sylvester}, {\em Global uniqueness for a semilinear elliptic inverse problem}, Comm.
Pure Appl. Math., \textbf{47} (1994), 1403-1410.
 \bibitem{K}{\sc  Y. Kian}, {\em Lipschitz and H\"older stable determination of nonlinear terms for elliptic equations},  Nonlinearity, \textbf{36} (2023), 1302-1322.
\bibitem{KKU}{\sc Y. Kian, K. Krupchyk, G. Uhlmann}, {\em Partial data inverse problems for quasilinear conductivity equations}, Math. Ann., \textbf{385} (2023), 1611-1638.
\bibitem{Ku}{\sc P. K\"ugler}, {\em Identification of a temperature dependent heat conductivity from single boundary
measurements}, SIAM J. Numer. Anal., \textbf{41} (2003), 1543-1563.
 \bibitem{KrU1} 
 {\sc K. Krupchyk, G. Uhlmann}, {\em Partial data inverse problems for semilinear elliptic equations with gradient nonlinearities}, Math. Res. Lett., \textbf{27} (2020), no. 6, 1801-1824.
 \bibitem{KrU2}
 {\sc K. Krupchyk, G. Uhlmann}, {\em A remark on partial data inverse problems for semilinear
elliptic equations}, Proc. Amer. Math. Soc., \textbf{148} (2020), no. 2, 681-685.
 \bibitem{KLU} {\sc Y. Kurylev, M. Lassas, G. Uhlmann}, {\em Inverse problems for Lorentzian manifolds and nonlinear
hyperbolic equations}, Inventiones mathematicae, \textbf{212} (2018), 781-857.
\bibitem{LLLS}
{\sc M. Lassas, T. Liimatainen, Y. H. Lin, M. Salo}, {\em Inverse problems for elliptic equations
with power type nonlinearities}, J. Math. Pures Appl., \textbf{145} (2021), 44--82.
\bibitem{MU}{\sc  C. Munoz and G. Uhlmann}, {\em   The Calder\'on problem for quasilinear elliptic equations}, Annales de l'Institut Henri Poincar\'e C, Analyse non lin\'eaire, \textbf{37} (2020), 1143-1166. 

\bibitem{PR}{\sc M. Piland and W. Rundell},  {\em A uniqueness theorem for determining conductivity from overspecified boundary data}, J. Math. Anal. Appl., \textbf{136} (1988), 20-28.
\bibitem{Sh} {\sc R. Shankar}, {\em Recovering a quasilinear conductivity from boundary measurements}, Inverse Problems, \textbf{27} (2020), 015014.
\bibitem{Sun1}
 {\sc Z. Sun}, {\em On a quasilinear inverse boundary value problem}, Math. Z., \textbf{221} (1996), no. 2,
293-305.
\bibitem{SuUh}
{\sc Z. Sun, G. Uhlmann}, {\em Inverse problems in quasilinear anisotropic media}, Amer. J. Math.,
\textbf{119} (1997), 771-797.
\bibitem{T}{\sc  E. Thomas}, {\em A polarization identity for multilinear maps},  Indag. Math., \textbf{25} (2014), 468-474.
\bibitem{ZF}{\sc Y. B. Zeldovich and D. A. Frank-Kamenetsky}, {\em A theory of thermal propagation of flame}, Acta Physicochim URSS, \textbf{9}  (1938), 341-350.
 

\bibliographystyle{abbrv}

\end{thebibliography}
\end{document}